\newif\ifshowcomments
\theoremstyle{plain}
\newtheorem{lem}{Lemma}[section]
\newtheorem{cor}[lem]{Corollary}
\newtheorem{prop}[lem]{Proposition}
\newtheorem{thm}[lem]{Theorem}
\newtheorem{intthm}{Theorem}
\newcommand{\isom}{\cong}
\theoremstyle{definition}
\newtheorem{discussion}[lem]{Discussion}
\newtheorem{ex}[lem]{Example}
\newtheorem{question}[lem]{Question}
\newtheorem{para}[lem]{}
\newtheorem{convention}[lem]{Convention}
\newtheorem*{convention*}{Convention}
\newcommand{\syz}{\Omega}
\def \pd {\operatorname{pd}}
\def \im {\operatorname{Im}}
\def \ord {\operatorname{ord}}
\def \B {\operatorname{Burch}}
\def \edim {\operatorname{edim}}
\def \soc {\operatorname{soc}}
\def \del {\partial}
\newcommand{\depth}{\operatorname{depth}}
\newcommand{\ann}{\operatorname{Ann}}
\newcommand{\type}{\operatorname{type}}
\newcommand{\gr}{\operatorname{gr}}
\newcommand{\coker}{\operatorname{Coker}}
\newcommand{\End}{\operatorname{End}}
\newcommand{\Ker}{\operatorname{Ker}}
\newcommand{\ideal}[1]{\mathfrak{#1}}
\newcommand{\m}{\ideal{m}}
\newcommand{\n}{\ideal{n}}
\newcommand{\fm}{\ideal{m}}
\newcommand{\fn}{\ideal{n}}
\newcommand{\xra}{\xrightarrow}
\newcommand{\xla}{\xleftarrow}
\def\syz{\mathrm{\Omega}}
\renewcommand{\geq}{\geqslant}
\renewcommand{\leq}{\leqslant}
\renewcommand{\ker}{\Ker}
\newcommand{\Ext}[4][R]{\operatorname{Ext}_{#1}^{#2}(#3,#4)}
\newcommand{\Hom}{\operatorname{Hom}}	
\newcommand{\Tor}[4][R]{\operatorname{Tor}^{#1}_{#2}(#3,#4)}
\def\Tor{\operatorname{Tor}}
\def\Ext{\operatorname{Ext}}
\def\tr{\operatorname{tr}}
\newcommand{\lo}{\longrightarrow}
\def\Tr{\operatorname{\textsf{Tr}}}
\def \Tr {\operatorname{Tr}}
\def \ord {\operatorname{ord}}
\numberwithin{equation}{lem}
\def \del {\partial}
\begin{document}

\bibliographystyle{amsplain}

\title[Vector space summands of lower syzygies]{Vector space summands of lower syzygies}

\author[M. Asgharzadeh]{Mohsen Asgharzadeh}
\address{Hakimiyeh\\
Tehran, Iran}
\email{mohsenasgharzadeh@gmail.com}

\author[M. DeBellevue]{Michael DeBellevue}
\address{Mathematics Department, Syracuse University, Syracuse, NY 13244 U.S.A.}
\email{mpdebell@syr.edu}
%\urladdr{https://mpdebell.expressions.syr.edu/}

\author[S. Dey]{Souvik Dey}
\address{Faculty of Mathematics and Physics, Department of Algebra, Charles University, Sokolovska 83, 186 75 Praha, Czech Republic}
\email{souvik.dey@matfyz.cuni.cz}
%\urladdr{https://sites.google.com/view/souvikdey/research?pli=1}

\author[S. Nasseh]{Saeed Nasseh}
\address{Department of Mathematical Sciences\\
Georgia Southern University\\
Statesboro, GA 30460, U.S.A.}
\email{snasseh@georgiasouthern.edu}
%\urladdr{https://sites.google.com/site/saeednasseh/home?pli=1}

\author[R. Takahashi]{Ryo Takahashi}
\address{Graduate School of Mathematics\\
Nagoya University\\
Furocho, Chikusaku, Nagoya, Aichi 464-8602, Japan}
\email{takahashi@math.nagoya-u.ac.jp}
%\urladdr{http://www.math.nagoya-u.ac.jp/~takahashi/}

\thanks{Souvik Dey was partly supported by Charles University Research Center program No. UNCE/24/SCI/022 and a grant GACR 23-05148S from the Czech Science Foundation. Ryo Takahashi was partly supported by JSPS Grant-in-Aid for Scientific Research 23K03070.}

%\date{\today}

%\dedicatory{}

\keywords{Artinian ring, Burch ring, Borel-fixed ideal, canonical module, Cohen-Macaulay ring, direct summand, Eliahou-Kervaire resolution, Gorenstein ring, injective envelope, minimal multiplicity, nearly Gorenstein ring, reflexive module, socle, syzygy, trace ideal, Ulrich module, fiber product}
\subjclass[2020]{13D02, 13E10, 13H10}

\begin{abstract}
In this paper, we investigate problems concerning when the residue field $k$ of a local ring $(R,\m,k)$ appears as a direct summand of syzygy modules. First, we prove that the following conditions are equivalent: (i) $k$ is a direct summand of second syzygies of all non-free finitely generated $R$-modules; (ii) $k$ is a direct summand of third syzygies of all non-free finitely generated $R$-modules; (iii) $k$ is a direct summand of $\m$. We also prove various consequences of these conditions.
Next, we investigate for what artinian local rings $R$ the dual $E^*=\Hom_R(E_R(k),R)$ of the injective envelope of the residue field $k$, which is also a second syzygy, is annihalated by the maximal ideal. 
%Using the notion of Eliahou-Kervaire resolution, we introduce a large class of artinian local rings that satisfy this condition.
\end{abstract}

\maketitle
%\tableofcontents

\section{Introduction}\label{section120215a}

\begin{convention}\label{conv20240916a}
Throughout the paper, $(R,\fm,k)$ is a commutative noetherian local ring which is not a field and all modules are finitely generated. The Krull dimension, depth, embedding dimension, Loewy length, multiplicity, and type of $R$ are denoted by $\dim(R)$, $\depth(R)$, $\edim(R)$, $\ell\ell(R)$, $e(R)$, and $\type(R)$, respectively. For an $R$-module $M$, we set $M^*=\Hom_R(M,R)$ and for a positive integer $n$, the notations $\syz^n(M)$ and $M^{\oplus n}$ stand for the $n$-th syzygy in the minimal $R$-free resolution of $M$ and the direct sum of $n$ copies of $M$. Also, $\ell_R(M)$, $\soc(M)$, and $\lambda_R(M)$ are the length of $M$, socle of $M$, and the minimal number of generators of $M$, respectively. The injective envelope $E_R(k)$ of $k$ will be simply denoted by $E_R$ (or just $E$ when the ring in question is clear) and finally, the Matlis dual $\Hom_R(M,E)$ of an $R$-module $M$ is denoted by $M^{\vee}$.
%For the $R$-modules $M$ and $N$, the notation $M|N$ means that $M$ is a direct summand of $N$.
\end{convention}

The notion of Burch rings was introduced and studied by Dao, Kobayashi, and Takahashi in~\cite{bur}, where among other results in this context, they proved that $R$ is Burch if and only if $k$ is a direct summand of $\syz^2(k)$; see~\cite[Theorem 4.1]{bur}. Later, Dao and Eisenbud~\cite{eda} introduced the notion of Burch index of a local ring $R$, denoted $\B(R)$, expanding the definition of Burch rings and proved the following result; see~\cite[Definition 2.2 and Theorem 2.3]{eda} for the definitions of these notions.

\begin{thm}[\protect{\cite[Theorem 1.1]{eda}}]\label{thm20240910a}
Let $M$ be a non-free $R$-module. If $\depth(R)=0$ and $\B(R)\geq 2$, then $k$ is a direct summand of $\syz^n(M)$ for some $n\in \{4, 5\}$ and for all integers $n\geq 7$. 
\end{thm}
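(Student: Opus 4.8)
The plan is to combine a concrete splitting criterion for the residue field with a propagation mechanism coming from Burchness, thereby reducing the theorem to producing just two low‑degree occurrences of $k$. The starting point is the observation that for a finitely generated $R$-module $N$, the field $k$ is a direct summand of $N$ if and only if $\soc(N)\not\subseteq\m N$; equivalently, writing $F_\bullet$ for the minimal $R$-free resolution of $M$, the module $k$ is a direct summand of $\syz^n(M)$ exactly when there is an element $v\in F_n\setminus\m F_n$ with $\m v\subseteq\im(\partial_{n+1})$. Next, since $\B(R)\geq 2$ forces $R$ to be Burch, \cite[Theorem 4.1]{bur} yields a decomposition $\syz^2(k)\cong k\oplus N_0$; because the minimal resolution of a direct sum splits as the direct sum of the minimal resolutions, whenever $k$ is a direct summand of $\syz^j(L)$ it is also a direct summand of $\syz^{j+2}(L)$. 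So if one can show that $k$ is a direct summand of $\syz^{n_0}(M)$ for some $n_0\in\{4,5\}$ and of $\syz^{n_1}(M)$ for some $n_1\leq 8$ of the opposite parity, the two arithmetic progressions $n_0,n_0+2,\dots$ and $n_1,n_1+2,\dots$ together cover $\{4,5\}$ and every integer $\geq 7$, which is precisely the claim.

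The heart of the matter, and where the hypothesis $\B(R)\geq 2$ is genuinely used, is the production of these low‑degree occurrences. Here I would fix a Cohen presentation $R=S/I$ with $(S,\n)$ regular local, $I\subseteq\n^2$, and $\edim S=\edim R$. As $\depth R=0$, we have $\soc(R)=(I:_S\n)/I\neq 0$, and the property that $R$ be Burch admits a presentation‑level description in terms of $I$, $\n I$, and $(I:_S\n)$, which the Burch index $\B(R)$ (see \cite[Definition 2.2]{eda}) refines quantitatively, with $\B(R)\geq 2$ strictly stronger than $\B(R)\geq 1$. The idea is to lift the first several steps of the minimal $R$-free resolution of $M$ to $S$-syzygies, keep track of the corrections introduced by $I$ --- whose complexity is governed by $\B(R)$ --- and thereby force an element of $(I:_S\n)\setminus I$ to produce a minimal generator of an explicit small syzygy of $M$ that is annihilated by $\m$.

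Running this construction for $M$ itself yields an index $n_0\in\{4,5\}$ with $k$ a direct summand of $\syz^{n_0}(M)$. To obtain the opposite parity, I would run the same construction for a non‑free module built from $M$ whose minimal resolution is controlled by that of $M$ --- for instance a low syzygy of $M$ (which shifts the output index by a fixed amount and so can flip the parity) or the dual $M^{*}$ --- obtaining $k$ in a syzygy of $M$ of the complementary parity, bounded by $8$; the precise location again lies in a small window, and it is this that makes the parities complementary. Feeding both indices into the propagation step of the first paragraph then finishes the proof.

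The main obstacle is exactly this core construction, and within it the parity issue. Ordinary Burchness already propagates $k$ through every \emph{other} syzygy, starting from the fact that $k$ is a direct summand of $\syz^2(k)$, so the real content of the theorem is landing $k$ in syzygies of \emph{both} parities with explicit, small bounds; this is what $\B(R)\geq 2$, as opposed to mere Burchness, provides. Making it rigorous requires a careful comparison of the minimal free resolution of $M$ over $R$ with the one over $S$ through the first few homological degrees, a verification that the elements built from $(I:_S\n)$ really are minimal generators of the relevant syzygy (i.e. do not lie in $\m\,\syz^n(M)$), and the bookkeeping of which of $4,5$, and of $6,7,8$, one lands in as a function of the structure of $M$. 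I expect this bookkeeping, and bounding how many resolution steps must be controlled, to be the technically delicate point; everything downstream is the formal propagation of the first paragraph.
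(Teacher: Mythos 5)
A preliminary remark on the comparison you asked for: the paper contains no proof of this statement at all --- it is quoted verbatim from Dao and Eisenbud \cite[Theorem 1.1]{eda} as background, so the only yardstick is the original argument of \emph{op.\ cit.} With that said, the outer layer of your proposal is sound: the socle criterion for splitting off $k$ (this is Lemma~\ref{HVS} of the paper), the observation that over a depth-zero Burch ring $\syz^2(k)\cong k\oplus N$ by \cite[Theorem 4.1]{bur} and hence a $k$-summand of $\syz^j(M)$ propagates to $\syz^{j+2}(M)$, and the arithmetic by which two seed occurrences of opposite parity, one in $\{4,5\}$ and one at most $8$, cover ``some $n\in\{4,5\}$ and all $n\geq 7$'' are all correct.

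The problem is that everything you have actually argued is the easy, formal part, and the entire content of the theorem --- the production of the seeds --- is left as a declared intention. ``Lift the first steps of the minimal resolution to $S$ and track the corrections, whose complexity is governed by $\B(R)$'' names no mechanism: nowhere do you use $\B(R)\geq 2$ beyond the fact that it implies Burchness, yet the paper's own Example~\ref{213} shows a Burch ring with $\B(R)=1$ for which $k$ fails to split off even $\syz^6(M)$, and \cite[Example 4.6]{eda} shows the occurrence at $n=4$ can fail; so any correct construction must genuinely exploit two independent Burch directions, which is precisely where Dao--Eisenbud's analysis of second syzygies $N\subseteq\m F$ containing $\soc(F)$ does the real work (and where the later improvement of \cite{DBM} needs DG/bar-resolution techniques). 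Your parity-flipping device is also broken as stated: running the hypothetical construction on $\syz^1(M)$ shifts the output index by one, but since the output is only located in a two-element window, the two runs can land on the same parity (for instance $5$ for $M$ and $4$ for $\syz^1(M)$, i.e.\ $5$ for $M$ again), and in depth zero the module $M^*$ bears no useful relation to the syzygies of $M$; so even granting the core construction, the ``opposite parity by $8$'' seed is not established. As it stands the proposal is a correct reduction plus a plan for the hard step, not a proof.
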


It is shown by Dao and Eisenbud~\cite[Example 4.6]{eda} that the assertion in Theorem~\ref{thm20240910a} does not necessarily occur for $n=4$. In a recent paper, using techniques from differential graded homological algebra and utilizing the notion of the Bar resolution, DeBellevue and Miller~\cite[Theorem A]{DBM} improved the lower bound $7$ in Theorem~\ref{thm20240910a} by showing that, under the same assumptions, $k$ is a direct summand of $\syz^n(M)$ for all integers $n\geq 5$.\vspace{2mm}

Our work in this paper was initially motivated by the above-mentioned results of~\cite{eda} and~\cite{DBM}. However, other relevant problems involving the residue field $k$ emerged along the way, one of which will be stated below as Question~\ref{question20240911a}  and the rest have been listed throughout the paper. In this introduction, we will discuss our results on the residue field $k$ from two perspectives.
%\subsection*{The residue field as a direct summand of lower syzygy modules}\
%\vspace{1mm}
First, following the theme from~\cite{eda} and~\cite{DBM}, we investigate the situation where $k$ is a direct summand of \emph{lower} syzygy modules, i.e., when $n<5$ in Theorem~\ref{thm20240910a} and \cite[Theorem A]{DBM}. This is the subject of Sections~\ref{sec20240910a} and~\ref{sec20240912z}, which from this point of view, can be considered as an addendum to \emph{op. cit.} documenting several results that are included in the statements of Theorems~\ref{int12} and~\ref{i1} below.

\begin{intthm}\label{int12}
The following conditions are equivalent:
\begin{enumerate}[\rm(i)]
\item\label{int121}
$k$ is a direct summand of $\syz^2(M)$ for every non-free $R$-module $M$;   
\item\label{int122}
$k$ is a direct summand of $\syz^3(M)$ for every non-free $R$-module $M$;
\item\label{int124}
$k$ is a direct summand of $\syz^1(k)=\fm$;
\item\label{int123}
$\soc(R)\nsubseteq \m^2$.
\end{enumerate}
\end{intthm}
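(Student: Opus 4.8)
The plan is to prove the cycle of implications \ref{int123}$\Rightarrow$\ref{int124}$\Rightarrow$\ref{int121}$\Rightarrow$\ref{int122}$\Rightarrow$\ref{int123}, with the bulk of the work concentrated in \ref{int124}$\Rightarrow$\ref{int121}.

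First I would establish \ref{int123}$\Leftrightarrow$\ref{int124}. The condition $\soc(R)\nsubseteq\m^2$ means there is a socle element $x$ that is part of a minimal generating set of $\m$. Writing $\m = Rx \oplus \n$ as sets is too naive, but since $x\in\soc(R)$ we have $\m x = 0$, so $Rx \cong k$ and there is a short exact sequence $0\to k \to \m \to \m/Rx \to 0$; the point is that $x$ being a minimal generator lets one split off the copy $Rx\cong k$ as a direct summand of $\m$, because any $R$-linear projection $\m\onto Rx$ extending the identity on $Rx$ exists (here one uses that $Rx$ is a direct summand of $\m/\m^2$ as a $k$-vector space together with $\m x=0$; the submodule of $\m$ generated by the remaining minimal generators serves as a complement). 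Conversely, if $k$ is a direct summand of $\m$, write $\m\cong k\oplus N$; the copy of $k$ must be generated by a single element $x$ with $\m x = 0$, i.e.\ $x\in\soc(R)$, and $x\notin\m^2$ since it is (part of) a minimal generator of $\m$, giving \ref{int123}.

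Next, \ref{int124}$\Rightarrow$\ref{int121}. Assume $k\mid\m$; equivalently, by \cite[Theorem 4.1]{bur}, $R$ is a Burch ring, which already gives $k\mid\syz^2(k)$. The job is to upgrade from $M=k$ to an arbitrary non-free $M$. I would take a minimal presentation $0\to\syz^1(M)\to F\to M\to 0$ with $F$ free; since $M$ is non-free, $\syz^1(M)\subseteq\m F$ and $\syz^1(M)$ has a minimal generator, so there is a surjection $\syz^1(M)\onto k$ (reduce mod $\m$ and pick a coordinate). Then $\syz^2(M)$ is the first syzygy of $\syz^1(M)$, and I want to compare it with $\syz^1(k)=\m$. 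Concretely, choose a minimal generator $e$ of $\syz^1(M)$; the inclusion $\syz^1(M)\hookrightarrow F$ together with $k\mid\m$ should let me produce a split injection $k\hookrightarrow\syz^2(M)$. The cleanest route: the map $\syz^1(M)\onto k$ lifts to a map of minimal free resolutions, inducing $\syz^2(M)\to\syz^2(\text{of }k\text{ viewed via this surjection})$; but $k$ here sits as a quotient of $\syz^1(M)$, and I should instead use that $\syz^1(M)$ has $k$ as a quotient to get a retraction at the level of second syzygies. Alternatively — and this is probably the slickest — use the characterization via $\m$: there is a short exact sequence $0\to\m\to R\to k\to 0$, and for the given minimal generator $e\in\syz^1(M)\setminus\m F$, restriction gives a copy of $R/(0:e)$; since $\syz^1(M)\subseteq\m F$ one has $\m e\subseteq\syz^1(M)$, and the syzygy of $\syz^1(M)$ contains the syzygy of the submodule $\m e \cong \m/(0:_\m e)$... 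The honest statement I expect to need is: if $k\mid\m$ then $k\mid\syz^1(L)$ for every module $L$ with a free summand in a presentation, i.e.\ essentially every non-free $L$ "contains $\m$-torsion" in the right sense; applying this with $L=\syz^1(M)$ yields $k\mid\syz^2(M)$. This is the main obstacle and the step where I would expect to need a genuine argument rather than a formal one — likely going through Burch rings and the explicit structure of their syzygies as in \cite{bur}, or through the observation that $k\mid\m$ forces $\m$ to be a direct summand behavior that propagates to syzygies of submodules of free modules.

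Finally, \ref{int121}$\Rightarrow$\ref{int122} is immediate since $\syz^3(M)=\syz^2(\syz^1(M))$ and $\syz^1(M)$ is non-free whenever $M$ is non-free (a non-free module has no free first syzygy in the minimal resolution unless $M$ itself is... one must be slightly careful: if $\syz^1(M)$ were free then $\pd M\leq 1$, which is allowed, but then $M$ non-free still has $\syz^1(M)\neq 0$; however $\syz^1(M)$ could be free, in which case $\syz^2(M)=0$ and the claim "$k\mid\syz^2(M)$" fails — so actually condition \ref{int121} already forces $R$ to have no modules of projective dimension exactly $1$, equivalently every non-free module has non-free first syzygy; I would note this and then \ref{int121}$\Rightarrow$\ref{int122} follows by applying \ref{int121} to $\syz^1(M)$). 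And \ref{int122}$\Rightarrow$\ref{int123}: apply \ref{int122} with $M=k$ to get $k\mid\syz^3(k)$, hence $R$ is Burch by \cite[Theorem 4.1]{bur} (Burchness is detected by $k$ being a summand of any $\syz^n(k)$ for $n\geq 2$, or one reduces: $k\mid\syz^3(k)=\syz^2(\syz^1(k))=\syz^2(\m)$, and $\m$ non-free gives, after one more reduction, $k\mid\syz^2(k)$), and then translate the Burch condition into $\soc(R)\nsubseteq\m^2$ — this last translation is the content one extracts from the definition of Burch ring in \cite{bur} together with the $\depth$-zero reductions, or is already packaged there; I would cite it. The circle is then closed.
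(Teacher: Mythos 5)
There are two genuine gaps. The first is the central implication from condition~\eqref{int124} (equivalently \eqref{int123}) to condition~\eqref{int121}, which you explicitly leave open. The auxiliary statement you propose to supply it --- that $k\mid\m$ should force $k$ to be a direct summand of $\syz^1(L)$ for ``essentially every'' non-free $L$ --- cannot be the right tool: having $k$ as a summand of $\syz^1(M)$ for every non-free $M$ is equivalent to $\fm^2=0$ (see Example~\ref{ex20240913a}), which is strictly stronger than $\soc(R)\nsubseteq\fm^2$; for instance the idealizations $S\ltimes k$ of Example~\ref{3} with $\n^2\neq0$ satisfy \eqref{int124} but not that statement. The paper's proof of \eqref{int123}$\Rightarrow$\eqref{int121} needs no Burch machinery and no lifting of maps of resolutions: from the minimal resolution one has an exact sequence $0\to\syz^2(M)\to R^{\oplus a}\xra{f}R^{\oplus b}\to M\to0$ with the entries of $f$ in $\m$, so applying $\Hom_R(k,-)$ gives $\soc(\syz^2(M))=\soc(R)^{\oplus a}$, while $\m\syz^2(M)\subseteq(\m^2)^{\oplus a}$. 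If $k$ were not a summand of $\syz^2(M)$, Lemma~\ref{HVS} would give $\soc(\syz^2(M))\subseteq\m\syz^2(M)$, hence $\soc(R)\subseteq\m^2$, a contradiction. This socle computation on the second syzygy is the idea missing from your sketch.

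The second gap is your route for \eqref{int122}$\Rightarrow$\eqref{int123}: you propose to deduce that $R$ is Burch from $k\mid\syz^3(k)$ and then ``translate the Burch condition into $\soc(R)\nsubseteq\m^2$.'' That translation is false: Burch is equivalent to $k\mid\syz^2(k)$ \cite[Theorem 4.1]{bur} and is strictly weaker than $\soc(R)\nsubseteq\m^2$. Example~\ref{ex20240913b}, $R=k[\![x,y]\!]/(x^4,x^2y,y^2)$, is Burch and has $k$ as a summand of $\syz^n(k)$ for all $n\geq2$, yet $\soc(R)=(xy,x^3)\subseteq\fm^2$; so no amount of information about summands of syzygies of $k$ alone yields \eqref{int123}. (The intermediate ``one more reduction'' from $k\mid\syz^2(\m)$ to $k\mid\syz^2(k)$ is also unjustified.) The paper instead uses \ref{1i}: since $\depth(R)=0$, $\soc(R)\cong k^{\oplus r}$ is itself a second syzygy of some non-free module $M$, so $\syz^3(M)\cong\m^{\oplus r}$; condition~\eqref{int122} then makes $k$ a summand of $\m^{\oplus r}$, and because $\End_R(k)\cong k$ is local, \cite[Lemma 1.2]{lw} cancels the exponent to give $k\mid\m$, whence \eqref{int123} by Lemma~\ref{HVS}. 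Your arguments for \eqref{int124}$\Leftrightarrow$\eqref{int123} and for \eqref{int121}$\Rightarrow$\eqref{int122} (including the observation that \eqref{int121} rules out modules of projective dimension one) are fine and agree with the paper.
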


If any of the equivalent conditions~\eqref{int121}-\eqref{int123} in Theorem~\ref{int12} holds, then by the aforementioned result~\cite[Theorem 4.1]{bur} of Dao, Kobayashi, and Takahashi, $R$ is Burch with $\depth(R)=0$. Adding to this, in Theorem~\ref{i}, we will show that this same conclusion holds if we replace $\syz^2$ or $\syz^3$ by $\syz^4$. Parts (a) and (b) of the next result contribute even more in this direction, while part (c) addresses a problem posed by Ramras~\cite{ramg} regarding the reflexive modules; see~\ref{para20240915v} for the definition.

\begin{intthm}\label{i1}
Assume that one of the equivalent conditions~\eqref{int121}-\eqref{int123} in Theorem~\ref{int12} holds. Then:
\begin{enumerate}[\rm(a)]
\item
The equality $\B(R)=\edim(R)$ holds.
\item
If $R$ is Gorenstein, then it is an artinian principal ideal ring of minimal multiplicity.
\item
If $R$ is Cohen-Macaulay and non-Gorenstein and $M$ is a reflexive $R$-module, then $M$ is free.
\end{enumerate}
\end{intthm}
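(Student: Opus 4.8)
The plan is to run all three parts off the hypothesis together with the cited consequence of \cite[Theorem~4.1]{bur}, namely that $R$ is Burch with $\depth(R)=0$ and that (in the form of condition~(iv) of Theorem~\ref{int12}) there is $x\in\soc(R)$ with $x\notin\fm^2$; equivalently, $\fm=\syz^1(k)\cong k\oplus N$ with $Rx\cong k$ the displayed summand. For (b) and (c) I would first remark that $R$ is then artinian, since $R$ Gorenstein, resp.\ Cohen--Macaulay, together with $\depth(R)=0$ forces $\dim(R)=0$.

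For part (a), since $\B(R)$, $\edim(R)$, and the conditions of Theorem~\ref{int12} are all insensitive to completion, I would replace $R$ by $\comp R$, fix a minimal Cohen presentation $\comp R=Q/I$ with $(Q,\n,k)$ regular local and $I\subseteq\n^2$, and lift $x$ to $\tilde x\in\n\smallsetminus\n^2$ with $\tilde x\,\n\subseteq I$; thus $\tilde x$ is a minimal generator of $\n$ lying in the colon ideal $(I:_Q\n)$. The hard part will be to combine this with the definition of the Burch index from \cite[Definition~2.2 and Theorem~2.3]{eda} and thereby identify $\B(R)$ with $\edim(R)$. I expect this to be the most delicate step of the proof, since it turns on a careful reading of the definition of $\B$ and on how the socle element $\tilde x$ enters it.

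For part (b), $R$ is artinian Gorenstein, so $\soc(R)$ is the unique minimal nonzero ideal of $R$ and hence is contained in every nonzero ideal of $R$. Were $\fm^2\ne0$, we would get $\soc(R)\subseteq\fm^2$, contradicting condition~(iv) of Theorem~\ref{int12}. Thus $\fm^2=0$, and then $\fm\subseteq\soc(R)\subseteq\fm$, i.e.\ $\fm=\soc(R)$, which is one-dimensional over $k$ since $R$ is Gorenstein. Hence $\edim(R)=1$ and $\fm^2=0$: the ideal $\fm$ is principal, so $R$ is an artinian principal ideal ring, and $\fm^2=0$ with $\edim(R)=1$ says it has minimal multiplicity.

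For part (c), $R$ is artinian and non-Gorenstein, so $t:=\type(R)=\dim_k\soc(R)\geq2$, and condition~(i) of Theorem~\ref{int12} applies: $k$ is a direct summand of $\syz^2(L)$ for every non-free $L$. Suppose, for contradiction, that $M$ is reflexive but not free. Dualizing a minimal free presentation $R^{\oplus b}\to R^{\oplus a}\to M^*\to0$ gives an exact sequence $0\to M^{**}\to R^{\oplus a}\to R^{\oplus b}\to N\to0$ with $N:=\coker(R^{\oplus a}\to R^{\oplus b})$; splitting it shows $M^{**}$ is, up to a free summand, the second syzygy $\syz^2(N)$. As $M\cong M^{**}$ is not free, neither is $N$, so $k$ is a direct summand of $\syz^2(N)$ and hence of $M$; applying the same reasoning to $M^*$ (again reflexive and not free) shows $k$ is a direct summand of $M^*$. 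To finish I would count copies of $k$: write $M\cong k^{\oplus s}\oplus M'$ with $M'$ having no direct summand isomorphic to $k$ (Krull--Schmidt applies, $R$ being complete artinian local) and let $\nu(-)$ denote the multiplicity of $k$ as a direct summand. Since $\Hom_R(k,R)\cong\soc(R)$ is a $k$-vector space of dimension $t$, dualizing yields $M^*\cong k^{\oplus st}\oplus(M')^*$ and then $M^{**}\cong k^{\oplus st^2}\oplus(M')^{**}$, so $\nu(M^{**})\geq st^2$. But $M\cong M^{**}$ gives $\nu(M)=\nu(M^{**})$, i.e.\ $s\geq st^2$; as $t\geq2$ this forces $s=0$, contradicting that $k$ is a direct summand of $M$. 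Therefore $M$ is free.
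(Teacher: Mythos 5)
Your parts (b) and (c) go through, but part (a) contains a genuine gap: you set up the right framework (complete, take a minimal Cohen presentation $R\cong S/I$ with $(S,\n)$ regular and $I\subseteq\n^2$, and lift the socle element to $f\in(I:_S\n)\setminus\n^2$), but you then explicitly defer the identification of $\B(R)$ with $\edim(R)$, and that identification is the entire content of the statement. By the definition of Burch index, $\B(R)=\dim_k\bigl(\n/(I\n:_S(I:_S\n))\bigr)$, so what must be proved is the equality $(I\n:_S(I:_S\n))=\n^2$. The paper's Proposition~\ref{4} does this as follows: since $f\in(I:_S\n)\setminus\n^2$, one has
\[
\bigl(I\n:_S(I:_S\n)\bigr)\subseteq(I\n:_Sf)\subseteq(\n^3:_Sf)\subseteq\n^2,
\]
where the last inclusion uses additivity of the $\n$-adic order function in the regular local ring $S$ (\cite[Theorem 6.7.8]{sh}): if $rf\in\n^3$ then $\ord_\n(r)+\ord_\n(f)\geq3$, and $\ord_\n(f)\leq1$ forces $r\in\n^2$. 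The reverse inclusion is immediate from $\n^2(I:_S\n)\subseteq\n I$. In other words, the "delicate step" you flag is not a matter of reading the definition of $\B$ carefully; it is an actual argument (one linear-form socle generator pushes the colon ideal $(I\n:_S(I:_S\n))$ all the way down to $\n^2$), and without it part (a) is unproven.

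For the record on (b) and (c): your (b) is correct and genuinely different from the paper's route. Proposition~\ref{thm20240912a} deduces that $R$ is a hypersurface from Burchness and \cite[Proposition 5.1]{bur} and then constructs a module with second syzygy $fR$ to force $f^2\in I$; you instead use that in an artinian Gorenstein local ring the one-dimensional socle lies in every nonzero ideal, so $\soc(R)\nsubseteq\m^2$ forces $\m^2=0$ and $\m=\soc(R)\cong k$, giving an artinian principal ideal ring of minimal multiplicity directly — shorter and more elementary. Your (c) is essentially the paper's argument in Proposition~\ref{prop20240915a}: your $N$ is $\Tr(M^*)$ and $M^{**}$ is $\syz^2(N)$ up to free summands; only the finish differs, since the paper uses that a summand of a reflexive module is reflexive and that $k$ reflexive forces $\type(R)=1$, while you count $k$-summands using $k^*\cong k^{\oplus\type(R)}$ and Krull--Schmidt (legitimate here, as $R$ is artinian, hence complete), obtaining $s\geq s\,\type(R)^2$. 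That finish is valid; just add the one-line justification that $N$ is non-free (if $N$ were free, the four-term dualized sequence would split and $M^{**}\cong M$ would be free), and note that the detour through $M^*$ being reflexive is not actually needed.
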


The proofs of Theorems~\ref{int12} and~\ref{i1} take up almost the entire Section~\ref{sec20240910a}. The conditions~\eqref{int121}-\eqref{int124} in Theorem~\ref{int12} are readily computable for particular rings and syzygy modules, which has allowed us to find many examples that do and do not satisfy these conditions.  These examples, as well as some involving higher syzygy modules, are collected in Section~\ref{sec20240912z}.\vspace{2mm}

%Assuming that $R$ is Gorenstein in Theorem~\ref{int12}, conditions~\eqref{int121}-\eqref{int123} become very restrictive. This is what the next result indicates.

%\begin{intthm}\label{thm20240910b}
%If $R$ is Gorenstein and $k$ is a direct summand of $\syz^2(M)$ for every non-free $R$-module $M$, then $R$ is a principal ideal ring and of minimal multiplicity.
%\end{intthm}

The second point of this article is to investigate the following question regarding $E^*$ being  annihilated by the maximal ideal; this is the subject of Sections~\ref{sec20240911a} and \ref{sec20240911d}. We note here that this property is equivalent to saying $\tr_R(E)=\soc(R)$ (see \ref{tinytr}). Since the socle is contained in any trace ideal of an Artinian local ring, this condition is equivalent to the ring having tiny canonical trace. 

Notice that when $R$ is not Gorenstein, $E^*$ is the second syzygy of the Auslander transpose of $E$; see~\ref{para20240911c}. Hence, this question is closely related to our work in Sections~\ref{sec20240910a} and~\ref{sec20240912z}.

\begin{question}\label{question20240911a}
Assuming that $R$ is artinian, when is $E^*$ a $k$-vector space? In other words, for what artinian local rings $R$ does the equality $\fm E^*=0$ hold?
\end{question}

Assuming that $R$ is artinian, since $E^*=\Ext^0_R(E,R)$ and $E$ is the canonical module of $R$, the condition $\m E^*=0$ in Question~\ref{question20240911a} can be considered as a counterpart to the assumptions in the following question posed by Dao, Kobayashi, and Takahashi; see also~\cite[Question 1]{Lyle}.

\begin{question}[\protect{\cite[Question 4.3]{trcan}}]\label{question20241014a}
If a local ring $R$ is Cohen-Macaulay with canonical module $\omega_R$, does $\m \Ext^{i>0}_R(\omega_R,R)=0$ imply that $R$ is nearly Gorenstein?
\end{question}

Lyle and Maitra~\cite[Example 5.2]{Lyle} showed that the answer to Question~\ref{question20241014a} is negative in general, however, they provided an affirmative answer to this question when $R$ is a local Cohen-Macaulay numerical semigroup ring of minimal multiplicity; see~\cite[Theorem 4.10]{Lyle}. 
The main results of Section~\ref{sec20240911a} 
are  Proposition~\ref{g} and Theorem~\ref{42new}. 
In Section~\ref{sec20240911d} we prove the following theorem which classifies the artinian non-Gorenstein nearly Gorenstein local rings that satisfy the condition $\m E^*=0$ and also provides a necessary condition for $E^*$ to be a $k$-vector space. This result shows that, outside of an exceptional case, the condition $\m E^*=0$ implies that a ring is not nearly Gorenstein, and suggests more generally that the dimension of the largest $k$-summand of $E^*$ may be a good numerical measurement of the failure of a ring to be nearly Gorenstein.  A part of statement~\eqref{int13a} in this result was independently discovered by Lyle and Maitra; see~\cite[Proposition 4.1]{Lyle}.

\begin{intthm}\label{int13}
The following statements hold:
\begin{enumerate}[\rm(a)]
\item\label{int13a}
If $R$ is not Gorenstein, then $\fm^2=0$ if and only if $R$ is nearly Gorenstein and the equality $\m E^*=0$ holds.
\item\label{int13b}
Assume that $R$ is artinian and $\m E^*=0$. If $R\cong S/I$, where $(S,\n,k)$ is an artinian Gorenstein local ring and $I$ is an ideal of $S$, then $\n(0:_SI)\subseteq I$.

\item Let $R$ be the fiber product of non-Gorenstein Artinian local rings $(S,\fm_S,k)$ and $(T,\fm_T,k)$ along $k$. Then, $E_R^*$ is annihilated by $\fm$ if and only if $E_S^*$ and $E_T^*$ are annihilated by $\fm_S$ and $\fm_T$ respectively. 
\end{enumerate} 
\end{intthm}

In the final section, we present a series of observations regarding the 1-dimensional analogue  of Question~\ref{question20240911a}. The primary focus is on the following result:

\begin{intthm}\label{int13'}
	Let $(R,\fm)$ be a 1-dimensional Cohen-Macaulay local ring admitting a canonical module $\omega=\omega_R$ such that the associated graded module $\gr_\fm(\omega^*)$ has positive depth. If $R$ is not Gorenstein and $\fm^2$ is Ulrich, then $\fm \omega^*$ is Ulrich. 
\end{intthm}
Additionally, we demonstrate that the assumptions of  Theorem \ref{int13'} are sharp and provide examples to discuss the converse.

\section{Residue field being a direct summand of syzygy modules}\label{sec20240910a}

This section is entirely devoted to the proofs of Theorems~\ref{int12} and~\ref{i1} from the introduction. We start with the following discussion that will be used later.

\begin{para}\label{1i}
If $\depth(R)=0$, then we have $\soc(R) \cong k^{\oplus r}$ for some positive integer $r$. On the other hand, writing $\m=(x_1,\ldots,x_n)$, it follows from the exact sequence
$$
0\lo \soc(R)\lo R \xra{\begin{pmatrix}x_1&\cdots&x_n\end{pmatrix}^{tr}} R^{\oplus n}
$$
of $R$-modules that $\soc(R)\cong k^{\oplus r}\cong \syz^2(M)$ for some $R$-module $M$.
\end{para}

\begin{para}
As we mentioned in the introduction, the notion of Burch index of $R$, denoted $\B(R)$, was introduced by Dao and Eisenbud in~\cite{eda}. This notion generalizes the definition of Burch rings introduced by Dao, Kobayashi, and Takahashi in~\cite{bur}. In fact, a local ring $R$ is Burch if $\B(R)\geq 1$. We refer the reader to the above-mentioned papers for the precise definitions of Burch ring and Burch index. 
\end{para}

%\begin{question}
%Let $(R,\m,k)$ a local ring of embedding dimension at least $2$. If $k$ is a direct summand of $\syz^4_R M$ for every non-free $R$-module $M$, then is $\B(R)\geq 2$? 
%\end{question}

%In the following, we will use the following well-known observation.

The following well-known statement will be used frequently in the subsequent results and examples.
%; see, for instance, \cite[Remark 2.3]{fan}.

\begin{lem}\label{HVS}
Let $M$ be a non-zero $R$-module. Then, $k$ is a direct summand of $M$ if and only if $\soc(M)\nsubseteq \fm M$. In particular, $k$ is a direct summand of $\m$ if and only if $\soc(R)\nsubseteq \fm^2$.
\end{lem}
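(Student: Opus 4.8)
The plan is to prove the stated equivalence for a general nonzero $M$ by establishing each implication directly, and then to obtain the ``in particular'' clause by specializing to $M=\fm$.

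Suppose first that $k$ is a direct summand of $M$, so we may write $M=N\oplus L$ with $L\cong k$. Since $L$ is a simple module, $\soc(L)=L$, hence $\soc(M)=\soc(N)\oplus L$, while $\fm M=\fm N\oplus\fm L=\fm N\subseteq N$. Picking any $0\neq y\in L$ exhibits an element of $\soc(M)$ not contained in $\fm M$ (because $L\cap N=0$), so $\soc(M)\nsubseteq\fm M$.

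For the converse, choose $x\in\soc(M)\ssm\fm M$. Since $0\in\fm M$ we have $x\neq0$, and $\fm x=0$ together with maximality of $\fm$ forces $\ann_R(x)=\fm$, so $Rx\cong R/\fm=k$. The remaining point is to split the inclusion $Rx\hookrightarrow M$: as $x\notin\fm M$, its image $\bar x$ in the $k$-vector space $M/\fm M$ is nonzero, so there is a $k$-linear functional $M/\fm M\to k$ carrying $\bar x$ to $1$; composing with the canonical surjection $M\onto M/\fm M$ gives an $R$-linear map $\varphi\colon M\to k$ with $\varphi(x)=1$. Then $\varphi$ restricts to an isomorphism $Rx\cong k$, and following $\varphi$ by its inverse produces a retraction $M\to Rx$ of the inclusion; hence $k\cong Rx$ is a direct summand of $M$. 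I do not expect a genuine obstacle here; the only step needing a little care is this splitting, which works precisely because $x\notin\fm M$ (so $x$ is a minimal generator of $M$) while simultaneously $x\in\soc(M)$ (so $Rx\cong k$).

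Finally, apply the equivalence with $M=\fm$, which is nonzero because $R$ is not a field. Since a unit cannot be annihilated by $\fm$ (otherwise $\fm=0$ and $R$ would be a field), we have $\soc(R)\subseteq\fm$, hence $\soc(\fm)=\soc(R)$; also $\fm\cdot\fm=\fm^2$. Therefore $k$ is a direct summand of $\fm$ if and only if $\soc(R)=\soc(\fm)\nsubseteq\fm^2$, as claimed.
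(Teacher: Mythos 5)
Your proof is correct. The ``only if'' direction is essentially the paper's argument in streamlined form: the paper splits off a maximal summand $k^{\oplus a}$ and observes that a nonzero element of it lies in $\soc(M)$ but not in $\fm M$ (since $\fm M$ sits inside the complement), which is exactly your computation with a single copy of $k$; the maximality in the paper's decomposition plays no real role. For the ``if'' direction you and the paper both start from $x\in\soc(M)\ssm\fm M$ and note $Rx\cong k$, but you realize the splitting differently: the paper extends $x$ to a minimal generating set, takes the submodule $N$ generated by the remaining generators, and checks by an element computation modulo $\fm M$ that $Rx\cap N=0$, so $M=Rx\oplus N$; you instead lift a $k$-linear functional on $M/\fm M$ sending $\bar x\mapsto 1$ to an $R$-linear map $\varphi\colon M\to k$ and use it to build a retraction $M\to Rx$. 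Both mechanisms rest on the same two facts ($x$ is a minimal generator and $\fm x=0$); your retraction version avoids the explicit generating-set bookkeeping and is slightly cleaner, while the paper's version constructs the complement by hand. Your treatment of the ``in particular'' clause (noting $\soc(R)\subseteq\fm$ because $R$ is not a field, hence $\soc(\fm)=\soc(R)$) is just a justified version of the paper's one-line remark that $\soc(R)=\soc(\fm)$.
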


In the proof of this lemma, for simplicity, we identify isomorphisms by equalities.

\begin{proof}
For the ``only if'' part, assume that $k$ is a direct summand of $M$ and write $M= k^{\oplus a}\oplus M'$, where $a$ is a positive integer such that $k$ is not a direct summand of the $R$-module $M'$. Then, $\soc(M)= k^{\oplus a}\oplus \soc(M')$ and $\m M=\m M'$. Now, let $x\in k^{\oplus a}\subseteq \soc(M)$ be a non-zero element. If $x\in \m M'\subseteq M'$, then $x\in k^{\oplus a}\cap M'=(0)$. This means that $x=0$, which is a contradiction. Thus, $x\notin \m M'=\m M$, and therefore, $\soc(M)\nsubseteq \fm M$.

For the ``if'' part, assume that $\soc(M)\nsubseteq \fm M$. This implies that $\soc(M)\neq (0)$. Let $z\in \soc(M)\setminus \m M$ be a non-zero element and note that $Rz=k$.
Consider a basis $G=\{\overline{x}_1=\overline{z},\ldots,\overline{x}_n\}$ for the $k$-vector space $M/\m M$, where each $x_i$ is an element of $M$, and extend this to a minimal generating set $\{x_1=z,\ldots,x_n,y_1,\ldots,y_m\}$ for $M$. Let $N$ be the $R$-submodule of $M$ generated by $\{x_2,\ldots,x_n,y_1,\ldots,y_m\}$ and note that $M=Rz+N$. Suppose $rz\in N$ for some $r\in R$. Hence, there are $r_2,\ldots,r_n,s_1,\ldots,s_m\in R$ such that $rz=r_2x_2+\cdots r_nx_n+s_1y_1+\cdots+s_my_m$. Moding out by $\m M$ and using the fact that $G$ is linearly independent, we conclude that $r\in \m$. Since $z\in \soc(M)$, we have $rz=0$. Thus, $M=Rz\oplus N$, which along with the fact that $Rz=k$, implies that $k$ is a direct summand of $M$. 

The last statement follows from the fact that $\soc(R)=\soc(\m)$.
\end{proof}

We now provide the proof of Theorem~\ref{int12}. Note that the implication \eqref{int123}$\implies$\eqref{int121} has been proven in~\cite[Proposition 3.3(2)]{ains}. However, we provide an alternative proof for the reader's convenience.\vspace{2mm}

\noindent \emph{Proof of Theorem~\ref{int12}.}
Note that the assumptions in conditions~\eqref{int121}-\eqref{int123} all imply that $\depth(R)=0$. Also, the equivalence~\eqref{int124}$\Longleftrightarrow$~\eqref{int123} is the latter part of Lemma~\ref{HVS}.\vspace{2mm}

\eqref{int121}$\implies$\eqref{int122}: Assume that $k$ is a direct summand of $\syz^2(M)$ for every non-free $R$-module $M$. Since $\depth(R)=0$, such an $R$-module $M$ has infinite projective dimension, i.e., none of the syzygies of $M$ are free. The assertion in~\eqref{int122} now follows from the isomorphism $\syz^3(M) \cong \syz^2(\syz^1(M))$ of $R$-modules.\vspace{2mm}

\eqref{int122}$\implies$\eqref{int123}: Since $\depth(R) = 0$, by~\ref{1i}, there exist a positive integer $r$ and a non-free $R$-module $M$ such that $k^{\oplus r} \cong \syz^2(M)$. Passing to the syzygies, we obtain 
\begin{equation}\label{eq20240912a}
\m^{\oplus r} \cong \syz^1(k)^{\oplus r} \cong \syz^3(M).
\end{equation}
It follows from~\eqref{int122} and~\eqref{eq20240912a} that $k$ is a direct summand of $\m^{\oplus r}$. Since $\End_R(k) \cong k$ is a local ring, using~\cite[Lemma 1.2]{lw} we conclude that $k$ is a direct summand of $\m$. Thus, by Lemma~\ref{HVS} we have $\soc(R) \nsubseteq \m^2$.\vspace{2mm}

\eqref{int123}$\implies$\eqref{int121}: Assume that $\soc(R) \nsubseteq \m^2$. Let $M$ be a non-free $R$-module and suppose on the contrary that $k$ is not a direct summand of $\syz^2(M)$. By Lemma~\ref{HVS},
\begin{equation}\label{eq20240912c}
\soc(\syz^2(M))\subseteq \m \syz^2(M).
\end{equation}
Now consider the exact sequence
\begin{equation}\label{eq20240912b}
0\to \syz^2(M) \xra{i} R^{\oplus a}\xra{f} R^{\oplus b}\to M\to 0
\end{equation}
obtained from the minimal $R$-free resolution of $M$ in which $f$ is a matrix with entries in $\m$ and $i$ is the natural inclusion. Note that all the $R$-modules in~\eqref{eq20240912b} are non-zero since $\depth(R)=0$ and $M$ is not free, so $M$ must have infinite projective dimension. Applying the functor $\soc(-)=\Hom_R(k, -)$ to~\eqref{eq20240912b} and noting that $\soc(f)=0$, we get the equality $\soc(\syz^2(M))=\soc(R)^{\oplus a}$. Therefore, by~\eqref{eq20240912c} and~\eqref{eq20240912b} we must have
$$
\soc(R)^{\oplus a}=\soc(\syz^2(M))\subseteq \m \syz^2(M)\subseteq (\m^2)^{\oplus a}.
$$
Hence, $\soc(R) \subseteq \m^2$, which contradicts our assumption in~\eqref{int123}. Therefore, $k$ must be a direct summand of $\syz^2(M)$, as desired.
\qed

\begin{para}
From homological point of view, if the equivalent conditions~\eqref{int121}-\eqref{int123} in Theorem~\ref{int12} hold, then $R$ is Tor-friendly in the sense of~\cite{ains}, and hence, it is Ext-friendly; see~\cite[Propositions 3.3 and 5.5]{ains}.
\end{para}

As we mentioned in the introduction, if any of the equivalent conditions~\eqref{int121}-\eqref{int123} in Theorem~\ref{int12} holds, then by~\cite[Theorem 4.1]{bur} the ring $R$ is Burch with $\depth(R)=0$. The following result proves the same conclusion when $\syz^4$ is involved.

\begin{thm}\label{i}
If $k$ is a direct summand of $\syz^4(M)$ for every non-free $R$-module $M$, then $R$ is a Burch ring with $\depth(R)=0$. 
\end{thm}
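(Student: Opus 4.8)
The plan is to mimic the structure of the proof of Theorem~\ref{int12}, feeding the hypothesis a cleverly chosen module produced by~\ref{1i}. First I would establish that $\depth(R)=0$. If not, choose a nonzerodivisor $x\in\fm$ and set $M:=R/(x)$; this is a non-free $R$-module with minimal free resolution $0\to R\xra{x}R\to M\to0$, so $\pd_R(M)=1$ and $\syz^4(M)=0$. Since the zero module does not have $k$ as a direct summand, this contradicts the hypothesis, and hence $\depth(R)=0$.

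Next I would invoke~\ref{1i}: since $\depth(R)=0$, writing $\fm=(x_1,\dots,x_n)$ there is an $R$-module $M$ with $\syz^2(M)\cong\soc(R)\cong k^{\oplus r}$, where $r=\dim_k\soc(R)\geq1$. Here one checks directly that the truncation of the presentation displayed in~\ref{1i} is a minimal free resolution, and that $M$ is non-free because $\syz^2(M)\neq0$. Truncating the minimal free resolution of $M$, using that the minimal free resolution of $k^{\oplus r}$ is the direct sum of $r$ copies of that of $k$, I obtain
\[
\syz^4(M)\;\cong\;\syz^2\bigl(\syz^2(M)\bigr)\;\cong\;\syz^2(k^{\oplus r})\;\cong\;\syz^2(k)^{\oplus r}.
\]

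Now I would apply the hypothesis to this particular $M$: $k$ is a direct summand of $\syz^2(k)^{\oplus r}$. Since $\End_R(k)\cong k$ is a local ring, the Krull--Schmidt-type cancellation in~\cite[Lemma 1.2]{lw} shows that $k$ is already a direct summand of $\syz^2(k)$. By~\cite[Theorem 4.1]{bur} this is precisely the statement that $R$ is Burch, and $\depth(R)=0$ has already been shown, completing the argument.

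As for obstacles, there is essentially none: the proof is a short adaptation of the proof of Theorem~\ref{int12}, and in fact the hypothesis is only used for the single module $M$ coming from~\ref{1i}. The one point that needs a little care is the bookkeeping of free summands — i.e.\ ensuring that no free summand enters $\syz^2(k^{\oplus r})$ — which is handled by working with minimal free resolutions throughout and then applying the cancellation lemma of~\cite{lw}.
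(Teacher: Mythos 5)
Your proposal is correct and follows essentially the same route as the paper: produce a non-free $M$ with $\syz^2(M)\cong k^{\oplus r}$ via~\ref{1i}, identify $\syz^4(M)$ with $\syz^2(k)^{\oplus r}$, cancel using $\End_R(k)\cong k$ and \cite[Lemma 1.2]{lw}, and conclude Burchness from \cite[Theorem 4.1]{bur}. The only (harmless) difference is your depth argument: the paper simply applies the hypothesis to $M=k$ and notes that $k$ being a summand of $\syz^4(k)$ forces $\depth(R)=0$, whereas you rule out $\depth(R)>0$ by feeding $R/(x)$ for a nonzerodivisor $x$ into the hypothesis.
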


\begin{proof}
Since $R$ is not a field, $k$ is not a free $R$-module.
It follows from the assumption that $k$ is a direct summand of $\syz^4(k)$, which implies that $\depth(R) = 0$.
Thus, by~\ref{1i} there is a positive integer $r$ such that $k^{\oplus r} \cong \syz^2(M)$ for a non-free $R$-module $M$. Hence, $\syz^2(k)^{\oplus r} \cong \syz^4(M)$.
By our assumption, $k$ is a direct summand of $\syz^2(k)^{\oplus r}$. Moreover, since $\End_R(k) \cong k$ is a local ring, using~\cite[Lemma 1.2]{lw} we conclude that $k$ is a direct summand of $\syz^2(k)$. Hence, the assertion that $R$ is a Burch ring follows from~\cite[Theorem 4.1]{bur}.
\end{proof}

\begin{para}
The converse of Theorem~\ref{i} is false by \cite[Example 4.6]{eda}. Also, we do not know if the condition on fourth syzygies in Theorem~\ref{i} can be shown to be equivalent to conditions~\eqref{int121}-\eqref{int123} of Theorem~\ref{int12}.
\end{para}

The proof of Theorem~\ref{i1} is split into Propositions~\ref{4}, \ref{thm20240912a}, and~\ref{prop20240915a} below. We proceed with the following discussion.

\begin{para}\label{para20240913b}
Dao and Eisenbud~\cite[Proposition 2.5]{eda} showed that the inequality
\begin{equation}\label{eq20240913a}
\B(R)\leq \edim(R)-\depth(R)
\end{equation}
holds for every local ring $R$ with equality holding when $R$ is Cohen-Macaulay with minimal multiplicty, among other conditions.
\end{para}

The next result, i.e., Theorem~\ref{i1} (a), introduces another case for which the equality holds in~\eqref{eq20240913a}, noting that here $\depth(R)=0$.

\begin{prop}\label{4}
If $\soc(R) \nsubseteq \m^2$, then  $\B(R)=\edim(R)$. 
\end{prop}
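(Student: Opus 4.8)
Since $\soc(R)\nsubseteq\fm^2$ forces $\soc(R)\neq 0$, and hence $\depth(R)=0$, the Dao--Eisenbud inequality~\eqref{eq20240913a} already gives $\B(R)\leq\edim(R)$. So the content of the proposition is the reverse inequality $\B(R)\geq\edim(R)$, and the plan is to read it off from the definition of the Burch index on a carefully chosen Cohen presentation.

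First I would pass to the completion: none of $\edim(R)$, $\soc(R)$, or $\fm^2$ changes, and $\B(R)$ is read off from a Cohen presentation, so we may assume $R=S/I$ with $(S,\fn,k)$ a regular local ring, $I\subseteq\fn^2$, and $\dim S=\edim(R)=:e$; note $e\geq 1$ since $R$ is not a field. Under the standard identifications $\soc(R)=(I:_S\fn)/I$ and $\fm^2=\fn^2/I$, the hypothesis $\soc(R)\nsubseteq\fm^2$ becomes exactly $I:_S\fn\nsubseteq\fn^2$. Pick $x_1\in(I:_S\fn)\setminus\fn^2$. Since $I:_S\fn$ is a proper ideal --- otherwise $\fn\subseteq I\subseteq\fn^2$, contradicting Nakayama --- the element $x_1$ lies in $\fn\setminus\fn^2$, so it is part of a regular system of parameters $x_1,\dots,x_e$ of $S$; and by definition of the colon ideal, $\fn x_1\subseteq I$, i.e.
\[
x_1^2,\ x_1x_2,\ \dots,\ x_1x_e\ \in\ I .
\]

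The remaining step feeds this into~\cite[Definition~2.2]{eda}. I expect that, on unwinding that definition, $\B_S(S/I)$ is measured by the rank of a multiplication map $\fn/\fn^2\to I/\fn I$ determined by a socle element of $S/I$, and that it is enough to exhibit one socle element making this map injective. The element $x_1\in I:_S\fn$ does exactly this: since $\fn x_1\subseteq I$, the assignment $\bar t\mapsto\overline{tx_1}$ gives a well-defined $k$-linear map $\fn/\fn^2\to I/\fn I$, and it sends the basis $\bar x_1,\dots,\bar x_e$ to $\overline{x_1^2},\overline{x_1x_2},\dots,\overline{x_1x_e}$, which are $k$-linearly independent because $\fn I\subseteq\fn^3$ and the corresponding degree-two monomials $X_1^2,X_1X_2,\dots,X_1X_e$ in $\operatorname{gr}_{\fn}(S)\cong k[X_1,\dots,X_e]$ are distinct. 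Hence this map is injective, so $\B_S(S/I)\geq e$, and therefore $\B(R)\geq e=\edim(R)$; combined with $\B(R)\leq\edim(R)$ this proves the proposition. The one place I expect real work is this last step: one has to match the argument to the precise formulation of the Burch index in~\cite{eda}, identifying exactly which $k$-space and which multiplication it records, and verifying that a socle element of $\fm$ that is a minimal generator gives a full-rank contribution. The rest --- the completion reduction, the translation of the socle condition into a statement about $I:_S\fn$, and the monomial independence in $\operatorname{gr}_{\fn}(S)$ --- is routine, and the conclusion is consistent with the extreme case $\fm^2=0$ (a zero-dimensional ring of minimal multiplicity), where $\B(R)=\edim(R)$ is already known.
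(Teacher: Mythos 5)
Your proposal is correct and, at its core, it is the paper's own proof: the same reduction to a minimal Cohen presentation, the same choice of a socle generator $x_1\in(I:_S\fn)\setminus\fn^2$, and your leading-form computation in $\operatorname{gr}_{\fn}(S)\cong k[X_1,\dots,X_e]$ is exactly the fact the paper extracts from the order function via \cite[Theorem 6.7.8]{sh} (orders add in a regular local ring, so $(\fn^3:_Sx_1)\subseteq\fn^2$ when $\ord_\fn(x_1)=1$). The one step you flag as uncertain does close, and in one line, from the formula the paper quotes as the definition: $\B(R)=\dim_k\bigl(\fn/(I\fn:_S(I:_S\fn))\bigr)$. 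Be aware that the index is not literally ``the rank of a multiplication map attached to one socle element'' --- it involves the whole ideal $(I:_S\fn)$ at once --- but a single element gives precisely the lower bound you need: injectivity of your map $\mu_{x_1}\colon\fn/\fn^2\to I/\fn I$ says that $(I\fn:_Sx_1)\cap\fn\subseteq\fn^2$, and no unit $u$ can lie in $I\fn:_Sx_1$ since $ux_1\notin\fn^2\supseteq I\fn$; hence $(I\fn:_S(I:_S\fn))\subseteq(I\fn:_Sx_1)\subseteq\fn^2$, while the reverse inclusion $\fn^2\subseteq(I\fn:_S(I:_S\fn))$ is trivial because $\fn(I:_S\fn)\subseteq I$. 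This gives $\B(R)=\dim_k(\fn/\fn^2)=\edim(R)$ outright, without even invoking the upper bound~\eqref{eq20240913a}; modulo this definitional bookkeeping, your argument and the paper's coincide.
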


\begin{proof}
Without loss of generality, we can assume that $R$ is complete. Let  $R \cong S/I$ be a minimal Cohen presentation, that is, $(S, \n)$ is a regular local ring and $I \subseteq \n^2$ is an ideal of $S$. Specifying the ring in the colon ideals to avoid confusion, note that $\soc(S/I)=(I:_S \n)/I$ and the condition $\soc(R) \nsubseteq \m^2$ translates to $(I:_S \n) \nsubseteq \n^2$. Let $f \in (I:_S \n) \setminus \n^2$. Then, we have the containments
\begin{equation}\label{eq20240912e}
\left(I \n:_S (I:_S \n)\right) \subseteq (I \n:_S f) \subseteq (\n^3 :_S f).
\end{equation}
If $r \in (\n^3 :_S f)$, then $rf \in \n^3$, that is, $\ord_{\n}(rf) \geq 3$. (Here, $\ord_{\n}(x)$ is defined to be $\sup\{m\in \mathbb{Z}\mid x\in \n^m\}$; see~\cite[Definition 6.7.7]{sh}.) By~\cite[Theorem 6.7.8]{sh} we get
$$
\ord_{\n}(r) + \ord_{\n}(f) \geq 3.
$$
Since $f \notin \n^2$, we have $\ord_{\n}(f) \leq 1$. Therefore, $\ord_{\n}(r) \geq 2$, which implies that $r \in \n^2$, i.e., $(\n^3 :_S f)\subseteq \n^2$. It follows from~\eqref{eq20240912e} that $\left(I \n:_S (I:_S \n)\right) \subseteq \n^2$. The other inclusion $\n^2 \subseteq \left(I \n:_S (I:_S \n)\right)$ is immediate since $\n^2(I:_S\n)\subseteq \n I$, and hence, we have the equality $\n^2 = \left(I \n:_S (I:_S \n)\right)$.
Thus, we have a series of equalities
$$
\B(R)= \dim_k \left(\n/\left(I \n:_S (I:_S \n)\right)\right) = \dim_k \left(\n/\n^2\right) = \edim(R)
$$
in which the first equality comes from the definition of Burch index.
\end{proof}

%\begin{para}
%Suppose  $\fm^2=0$. It is easy to see that $k$ is a direct summand of $\syz^2_R M$ for every non-free.
%\end{para}

Part (b) of Theorem~\ref{i1} can be deduced from the next proposition. This result clearly shows that the equivalent conditions~\eqref{int121}-\eqref{int123} in Theorem~\ref{int12} are, in some sense, restrictive when the ring is assumed to be Gorenstein.

\begin{prop}\label{thm20240912a}
Assume that $R$ is Gorenstein. If $k$ is a direct summand of $\syz^2(M)$ for every non-free $R$-module $M$, then $R$ is an artinian hypersurface of the form $R\cong S/(f^2)$, where $S$ is a discrete valuation domain and $0\neq f\in S$. In particular, $R$ has minimal multiplicity.
\end{prop}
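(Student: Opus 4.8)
The plan is to use the equivalence already established in Theorem~\ref{int12}: the hypothesis that $k$ is a direct summand of $\syz^2(M)$ for every non-free $M$ is equivalent to $\soc(R)\nsubseteq\m^2$. Since $R$ is Gorenstein and (as noted) $\depth(R)=0$, $R$ is artinian with one-dimensional socle, so $\soc(R)\cong k$. The condition $\soc(R)\nsubseteq\m^2$ then forces $\soc(R)$ to survive in $\m/\m^2$; in particular the socle is generated by an element of order exactly $1$, i.e. a minimal generator of $\m$. I would first argue that this pins down $\edim(R)$. Concretely, pass to a minimal Cohen presentation $R\cong S/I$ with $(S,\n)$ regular and $I\subseteq\n^2$; then $\soc(R)=(I:_S\n)/I$, and $\soc(R)\nsubseteq\m^2$ says $(I:_S\n)\nsubseteq\n^2$, so there is $f\in(I:_S\n)$ with $\ord_\n(f)=1$. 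By Proposition~\ref{4} we already know $\B(R)=\edim(R)$, and by~\eqref{eq20240913a} combined with $\depth(R)=0$ this gives no contradiction yet; the real input is that $\soc(R)$ is principal. Because $R$ is Gorenstein artinian, $\lambda_R\big(\Hom_R(k,R)\big)=1$, so $(I:_S\n)/I$ is generated by the single class $\bar f$.

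Next I would show $\edim(R)=1$. Suppose $\edim(R)=e\geq 2$ and write $\n=(x_1,\dots,x_e)$. Since $f\in(I:_S\n)$ has $\ord_\n(f)=1$, after a linear change of coordinates we may take $f\equiv x_1\pmod{\n^2}$, and $x_if\in I$ for all $i$. The key point is that $f\notin I$ (else $\soc(R)=0$), while $\n f\subseteq I$; combined with Gorensteinness, $\bar f$ spans the one-dimensional socle of $R$. Now consider the element $x_2\in\m$. Its order in $S$ is $1$, so $x_2\notin I$ and $\bar x_2\neq 0$ in $R$. I would localize/analyze the structure: because $x_1f\in I$ but $f\notin I$, multiplication by $x_1$ kills the nonzero socle-generating direction, yet also $x_1\in\m$ is a nonzerodivisor-free situation forces relations that cannot coexist with $e\geq 2$ in a Gorenstein ring of this type. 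The cleanest route is: in a Gorenstein artinian local ring, $\m f=0$ in $R$ and $f\neq 0$ means $\bar f$ generates $\soc(R)$; but $\bar f$ is also a minimal generator of $\m$, so $\m=R\bar f\oplus N$ as in Lemma~\ref{HVS} with $N=(\bar x_2,\dots,\bar x_e)$, and $\m\cdot R\bar f=0$. Since $R$ is Gorenstein, $0:_R\m=\soc(R)=k\bar f$; then $0:_R\bar f\supseteq\m\supseteq N$, so $\bar f\cdot N=0$. This means the ideal $N$ and $R\bar f$ multiply to zero and $\m^2=N^2$. If $e\geq 2$ then $N\neq 0$ and $N$ is an ideal with $N\cap\soc(R)=0$ (as $\soc(R)=k\bar f$), forcing $\soc(N)=0$ inside the artinian module $N$, hence $N=0$, a contradiction. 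Therefore $e=\edim(R)=1$.

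Finally, with $\edim(R)=1$ and $R$ complete, $R\cong S/I$ with $S$ a DVR (a one-dimensional regular local ring) and $I\subseteq\n^2$ a nonzero ideal (nonzero since $R$ is artinian). Every ideal of a DVR is a power of the maximal ideal, so $I=(f^{\,m})$ for some $f$ a uniformizer of $S$ and some $m\geq 2$. Then $\soc(R)=(I:_S\n)/I$ is generated by the class of $f^{\,m-1}$, which has order $m-1$; the condition $\soc(R)\nsubseteq\m^2$ forces $m-1\leq 1$, i.e. $m=2$. Hence $R\cong S/(f^2)$ with $S$ a DVR and $0\neq f\in S$, an artinian hypersurface of multiplicity $2=\edim(R)+1$, so $R$ has minimal multiplicity. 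For the non-complete case one first completes: $\comp R$ inherits the hypothesis (the conditions~\eqref{int121}--\eqref{int123} and Gorensteinness pass to the completion), so $\comp R\cong S/(f^2)$; since $\comp R$ is a complete intersection and $\edim$, multiplicity, and Gorensteinness descend, $R$ itself is a one-dimensional-free, i.e. an artinian hypersurface of the stated form.

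I expect the main obstacle to be the step ruling out $\edim(R)\geq 2$ cleanly: one must leverage that the socle is simultaneously simple (Gorenstein) and not contained in $\m^2$, and argue that the complementary ideal $N$ in $\m=k\oplus N$ would have to contain socle elements of its own, contradicting simplicity of $\soc(R)$. Packaging this without circular use of Lemma~\ref{HVS} and keeping track of what "$\m f=0$" means in $R$ versus $S$ is the delicate part; everything after $\edim(R)=1$ is routine DVR ideal theory.
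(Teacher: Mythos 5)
Your proof is correct, but it takes a genuinely different route from the paper. The paper's proof is short and leans on external machinery: since the hypothesis makes $R$ Burch, \cite[Proposition 5.1]{bur} forces a Gorenstein Burch ring to be a hypersurface, so $R\cong S/(f^n)$ with $S$ a discrete valuation domain; then a specific module $M$ with $\syz^2(M)=fR$ is fed back into the hypothesis, and since $fR$ is principal a $k$-summand forces $fR\cong k$, i.e.\ $n=2$. Your ``cleanest route'' paragraph is instead self-contained: from Theorem~\ref{int12} you take $z=\bar f\in\soc(R)\setminus\m^2$, split $\m=Rz\oplus N$ as in the proof of Lemma~\ref{HVS}, and use that the Gorenstein socle is simple to get $\soc(N)\subseteq\soc(R)\cap N=0$, whence $N=0$ because a nonzero module over the artinian ring $R$ has nonzero socle; this actually proves the stronger statement $\m\cong k$ (so $\edim(R)=1$ and $\m^2=0$), after which the identification $R\cong S/(f^2)$ and minimal multiplicity are immediate. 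What your approach buys is independence from \cite[Proposition 5.1]{bur} and from any syzygy construction; what the paper's buys is brevity given the Burch literature. Two small clean-ups for your write-up: the Cohen-presentation detour and the references to Proposition~\ref{4} are unnecessary scaffolding, and note that $\ord_{\fn}(f)=1$ alone does not give $\bar f\notin\m^2$ (one should simply choose $\bar f\in\soc(R)\setminus\m^2$ directly in $R$, which the hypothesis provides); also the worry about completing is moot since an artinian local ring is already complete, so Cohen's structure theorem applies at once.
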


\begin{proof}
	Combining the fact that $R$ is Burch in this case with~\cite[Proposition 5.1]{bur}, we deduce that $R$ is a hypersurface. Note that $\dim(R) = \depth(R) = 0$, that is, $R$ is artinian. Therefore, there exist a regular local ring $(S,\fn)$ with $\dim(S)=1$ (i.e., $S$ is a discrete valuation domain) and a principal ideal $I$ of $S$ such that $R\cong S/I$. Let $I = (f^n)$, where $f \in \fn$ is the uniformizer and $n$ is a positive integer. Note that $n\neq 0, 1$ because the artinian ring $R$ is not the zero ring and is not regular (i.e., is not a field). If $n\geq 2$, then using the zero-divisor pair $f$ and $f^{n-1}$ on $R$, we can construct an $R$-module $M$ such that $\syz^2(M) = fR$. More precisely, considering the exact sequence
$$
0\to fR\xra{\alpha} R\xra{\beta}R\to \coker(\beta)\to 0
$$
of $R$-modules in which $\alpha$ is the natural injection and $\beta$ is the multiplication map by $f^{n-1}$, we have $M=\coker(\beta)$.
By assumption, $fR$ admits a $k$-summand, but as $fR$ is principal, this implies that $fR\cong k$, so $f^2\in I$, forcing $n=2$.
\end{proof}

Before we proceed to the proof of Theorem~\ref{i1} (c), i.e., Proposition~\ref{prop20240915a} below, we remind the reader of the definition of reflexive modules.

\begin{para}\label{para20240915v}
	An $R$-module $M$ is called \emph{reflexive} if the biduality map $f\colon M\to M^{**}$ defined by $f(m)\varphi=\varphi(m)$, for all $\varphi\in M^*$, is an isomorphism.
\end{para}

We also need the following tool for the proof of Theorem~\ref{i1} (c).

\begin{para}\label{para20240911c}
Let $F_1\xra{\varpi} F_0\to M\to 0$ be a free presentation for an $R$-module $M$. Applying the functor $(-)^*$, we get the exact sequence
$$
0\to M^*\to F_0^*\xra{\varpi^*} F_1^*\to \coker(\varpi^*)\to 0.
$$
We denote $\coker(\varpi^*)$ by $\Tr M$; it is called the \emph{Auslander transpose of $M$}.
The Auslander transpose depends on the free presentation of $M$, however, it is unique up to free summand.
Also, the $R$-modules $\Tr \Tr M$ and $M$ are isomorphic up to free summands.
\end{para}

\begin{prop}\label{prop20240915a}
Suppose $k$ is a direct summand of $\syz^2(M)$ for all non-free $R$-modules $M$. If $R$ is Cohen-Macaulay and non-Gorenstein and $M$ is a reflexive $R$-module, then $M$ is free.
\end{prop}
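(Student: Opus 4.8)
The plan is to argue by contradiction: I will show that a reflexive non-free module would force the residue field $k$ to be reflexive, which is impossible once $R$ is artinian and non-Gorenstein. For the setup, note that by Theorem~\ref{int12} the hypothesis forces $\depth(R)=0$, so since $R$ is Cohen--Macaulay it is artinian, and since it is non-Gorenstein we have $r:=\type(R)=\dim_k\soc(R)\geq 2$. Hence $k$ is not reflexive: indeed $k^*=\Hom_R(k,R)=\soc(R)\cong k^{\oplus r}$, so $k^{**}\cong k^{\oplus r^2}$, which is not isomorphic to $k$ when $r\geq 2$. I will freely use two elementary facts: a direct summand of a reflexive module is reflexive (the biduality map of a direct sum is the direct sum of the biduality maps), and $M$ reflexive implies $M^*$ reflexive (from the zigzag identity $\delta_M^*\circ\delta_{M^*}=\id_{M^*}$: when $\delta_M$ is an isomorphism so is $\delta_M^*$, which forces $\delta_{M^*}$ to be one).

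Now suppose $M$ is reflexive and non-free. After peeling off free summands---a process that terminates because $R$ is artinian---I may assume $M$ has no free direct summand; it is still non-free, hence nonzero. Since $M\cong M^{**}$, the dual $M^*$ also has no free summand, because a free summand of $M^*$ would dualize to one of $M^{**}\cong M$. The key step is the identification $M^*\cong\syz^2(\Tr M)$, with $\Tr M$ non-free. To obtain it, dualize the minimal free resolution $\cdots\to F_1\xrightarrow{d_1}F_0\to M\to 0$ to get the exact sequence $0\to M^*\to F_0^*\xrightarrow{d_1^*}F_1^*\to\Tr M\to 0$ of~\ref{para20240911c}. Because $d_1$, and hence $d_1^*$, has entries in $\fm$, the map $F_1^*\to\Tr M$ is a projective cover; and because $M^*\subseteq F_0^*$ has no free summand it is contained in $\fm F_0^*$ (any submodule of a free module over a local ring not contained in $\fm F$ splits off a free summand of $F$), so $F_0^*\to\im(d_1^*)$ is a projective cover as well. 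Therefore $\syz^2(\Tr M)\cong M^*$. Moreover $\Tr M$ cannot be free: otherwise $\Tr\Tr M=0$, so by~\ref{para20240911c} $M$ would be stably free, hence free (being projective over a local ring), a contradiction.

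With the identification in hand, apply the hypothesis to the non-free module $\Tr M$: $k$ is a direct summand of $\syz^2(\Tr M)\cong M^*$. But $M^*$ is reflexive, so $k$ is reflexive, contradicting the setup. Hence $M$ is free. The step I expect to be the main obstacle, and the one needing the most care in a full write-up, is the identification $M^*\cong\syz^2(\Tr M)$: the content is not the exact sequence---which is~\ref{para20240911c}---but the claim that the dualized resolution is still \emph{minimal}, and this is exactly where the reductions to "no free direct summand" for both $M$ and $M^*$ are needed, via the elementary lemma on submodules of free modules.
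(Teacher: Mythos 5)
Your proof is correct, and its skeleton is the same as the paper's: use the transpose/second-syzygy duality of \ref{para20240911c} to force $k$ to be a direct summand of a reflexive module, conclude $k$ is reflexive, and contradict non-Gorensteinness via the type. The genuine difference is which module you feed into the hypothesis. You apply it to $\Tr M$, landing $k$ inside $M^*$, and therefore need the extra lemma that the dual of a reflexive module is reflexive (your zigzag identity), together with the free-summand stripping and the minimality bookkeeping; the paper instead applies the hypothesis to $\Tr M^*$, so that $\syz^2(\Tr M^*)\cong M^{**}\cong M$ and $k$ lands directly inside $M$, which is reflexive by assumption, so no duality lemma or reduction is needed. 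Two small remarks on your version: the minimality of the dualized resolution, which you flag as the main obstacle, is in fact not needed, because over a local ring any free resolution is the minimal one plus a split exact complex of free modules, so the kernel $M^*$ in the dualized sequence is isomorphic to the minimal $\syz^2(\Tr M)$ plus a free summand, and $k$ being a summand of the minimal second syzygy already makes it a summand of $M^*$; and the termination of your free-summand peeling needs only finite generation, not artinianness. Finally, where you invoke Theorem~\ref{int12} to get $\depth(R)=0$ and $\type(R)\geq 2$ up front, the paper extracts $\depth(R)=0=\dim(R)$ afterwards from $k^*\cong\soc(R)\neq 0$ once $k$ is known to be reflexive; both routes are valid.
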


\begin{proof}
Suppose, on the contrary, that there exists a non-free reflexive $R$-module $M$. If $\Tr M^*$ is free, then $\Tr \Tr M^*$ is also free and by our discussion in~\ref{para20240911c}, the $R$-module $M^*$ is free as well. Consequently, $M\cong M^{\ast\ast}$ is free, which contradicts our assumption that $M$ is non-free. Therefore, $\Tr M^*$ is not a free $R$-module.
Thus, $k$ is a direct summand of $\syz^2(\Tr M^\ast) = M^{\ast\ast} \cong M$. Since any direct summand of a reflexive module is also reflexive, $k$ must be reflexive. This implies that $k^\ast \neq (0)$, and hence, $\depth(R) = 0=\dim(R)$. Given that $k$ is reflexive, we deduce that $\type(R) = 1$, which means that $R$ is Gorenstein. This contradiction shows that such an $R$-module $M$ does not exist.
\end{proof}

The assumption that $R$ is Cohen-Macaulay is necessary in Proposition~\ref{prop20240915a}; this is demonstrated by Example~\ref{PropCMNecessity}.

\section{Examples and more}\label{sec20240912z}

This section is entirely devoted to listing a variety of examples that do and do not satisfy conditions~\eqref{int121}-\eqref{int123} in Theorem~\ref{int12} as well as some examples that deal with higher syzygy modules. We refer the reader to~\cite{bur} for more examples. We also pose Question~\ref{question20240913a} that may be of interest to a curious reader. We start with the following example that introduces a large class of local rings satisfying conditions~\eqref{int121}-\eqref{int123} in Theorem~\ref{int12}; compare with~\cite[Theorem 4.1(1)]{eda}.

\begin{ex}\label{3}
Let $R=S\ltimes k$ be the Nagata idealization (also called trivial extension) of a local ring $(S,\n, k)$ with respect to its residue field $k$. As an underlying set, $R$ is the cartesian product $S\times k$, with multiplication defined by the formula $(s_1,\lambda_1)(s_2,\lambda_2)=(s_1s_2,s_2\lambda_1+s_1\lambda_2)$. Note that $\m= \n \oplus k$ and the elements of $\m^2$ always have second coordinate equal to $0$. Thus, $(0,1)\in \soc(R)\setminus \m^2$, which means that $\soc(R)\nsubseteq \m^2$. 
\end{ex}

\begin{para}\label{para20240914a}
Using the notation from Example~\ref{3}, note that there is a ring isomorphism
$$
R\cong S\times_k\left(k[x]/(x^2)\right)
$$
in which the right-hand side is the fiber product ring over $k$; see, for instance, \cite{nasseh:vetfp}-\cite{nasseh:oeire} for the definition and various properties of the fiber product rings.
\end{para}

Having the fiber product rings introduced, the next example shows that the Cohen-Macaulay assumption is necessary in Proposition~\ref{prop20240915a}, i.e., in Theorem~\ref{i1} (c).
More generally, any ring with $\soc(R)=(x)\not\subseteq\m^2$ that is not Cohen-Macaulay demonstrates the necessity of the Cohen-Macaulay hypothesis in Proposition~\ref{prop20240915a}.

\begin{ex}\label{PropCMNecessity}
Let $R=k[\![x,y]\!]/(xy,y^2)$, which is not Cohen-Macaulay. It is straightforward to check that $R\cong k[\![x]\!]\times_k \left(k[\![y]\!]/(y^2)\right)$, which is a fiber product ring over $k$; for this isomorphism see, for instance, \cite[3.3]{NST}. As we discussed in~\ref{para20240914a}, we have an isomorphism $R\cong k[\![x]\!]\ltimes k$ and therefore, by Example~\ref{3} of the next section, $k$ is a direct summand of $\syz^2(M)$ for all non-free $R$-modules $M$. Note that $k^*\cong \soc(R)=Rx\cong R/\ann(x)=k$. Thus, $k^{\ast\ast}\cong k$, and so $k$ is a reflexive $R$-module which is not free. 

Note that over this particular ring $R$ of depth zero, none of the $R$-modules $\syz^n(k)$ with $n\geq 1$ is reflexive in view of ~\cite[Theorem 4.1(3)]{deyt}.  
\end{ex}

\begin{ex}
If $R=S/ f\n$, where $(S,\n)$ is a local ring and $f\in\n^t\setminus \n^{t+1}$ for some positive integer $t$, then $k$ is a direct summand of $\m^t$. This is another application of Lemma~\ref{HVS} because $f\in\soc(\m^t)\setminus \m^{t+1}$. Hence, in the special case where $t=1$, we have that $k$ is a direct summand of $\m$, i.e., conditions~\eqref{int121}-\eqref{int123} in Theorem~\ref{int12} hold.
\end{ex}

The following example is specifically interesting in that it shows how strong $k$ being a direct summand of $\m=\syz^1{k}$, namely condition~\eqref{int124} in Theorem~\ref{int12}, is. As we see in this example, the residue field $k$ being a direct summand of higher syzygies of $k$ is not enough for conditions~\eqref{int121}-\eqref{int123} in Theorem~\ref{int12} to hold.\vspace{2mm}

\begin{ex}\label{ex20240913b}
Let $R=k[\![x,y]\!]/(x^4,x^2y,y^2)$. Using Macaulay2\cite{Grayson}, after performing some elementary row and column operations, we compute that a presentation matrix of $\syz^2(k)$ is
$$
\begin{pmatrix}
y&x&0&0&0&0&0&0\\
0&-y&xy&x^3&0&0&0&0\\
0&0&0&0&y&x&0&0\\
0&0&0&0&0&0&y&x
\end{pmatrix}
=\begin{pmatrix}
y&x&0&0\\
0&-y&xy&x^3
\end{pmatrix}
\oplus\begin{pmatrix}
y&x
\end{pmatrix}^{\oplus2}.
$$
Note that the matrix 
$\begin{pmatrix}
y&x&0&0\\
0&-y&xy&x^3
\end{pmatrix}$
is the presentation matrix of $\m$. Therefore, we have the isomorphism
\begin{equation}\label{eq20240913d}
\syz^2(k)\cong\m\oplus k^{\oplus2}=\syz^1(k)\oplus k^{\oplus2}.
\end{equation}
Applying the functor $\syz^1(-)$ repeatedly to~\eqref{eq20240913d}, we conclude that $k$ is a direct summand of $\syz^n(k)$ for all integers $n\geq 2$.
On the other hand, $\soc(R)=(xy,x^3)\subseteq\fm^2$. Therefore, it follows from Lemma~\ref{HVS} that $k$ is not a direct summand of $\fm=\syz^1(k)$. This means that conditions~\eqref{int121}-\eqref{int123} in Theorem~\ref{int12} do not hold for this example. This fact confirms the conclusion of~\cite[Example 4.5]{eda}.
\end{ex}

The following example shows that a condition on first syzygies can not be added to the equivalent conditions of Theorem~\ref{int12}.

\begin{ex}\label{ex20240913a}
Let $M$ be a non-free $R$-module, and for each non-negative integer $n$ let $\beta_{n}(M)$ denote the $n$-th Betti number of $M$, that is, $\beta_{n}(M)=\dim_k(\Tor^R_n(k,M))$. If $\fm^2=0$, then $k^{\beta_{n+1}(M)}$ is a direct summand of $\syz^n(M)$. To see this, note that $\syz^n(M)\subseteq \fm R^{\beta_{n}(M)}$. Therefor, $\syz^n(M)$ is a $k$-vector space with $\dim_k(\syz^n(M))=\beta_{n+1}(M)$. Hence, $k^{\beta_{n+1}(M)}\cong\syz^n(M)$, and in particular, $k$ is a direct summand of $\syz^1(M)$ for every non-free $M$.

Conversely, if $k$ is a direct summand of $\syz^1(M)$ for every non-free $R$-module $M$, then $k$ is a direct summand of $(x)=\syz^1(R/(x))$ for every $x\in \m$.
But since $(x)$ is principle, this means $(x)\cong k$ is killed by $\m$, and since $x\in\m$ was arbitrary, this means $\m^2=0$.
\end{ex}

If $R$ is non-Gorenstein with $\edim(R)=2$, then by~\cite[Example 1.2]{link} the $R$-module $k^{\oplus \type(R)}$ is a direct summand of $\syz^3(k)$.
This fact and Example~\ref{ex20240913a} motivate us to ask the following question for which we do not have a general answer at this time, even for $M=k$, although \cite[Theorem~B]{DBM} provides a course lower bound for modules over Golod rings.

\begin{question}\label{question20240913a}
Assume that $R$ is Burch with $\depth(R)=0$, and let $M$ be a non-free $R$-module (e.g., $M=k$).
How many copies of $k$ occur as direct summands in $\syz^n(M)$ for each positive integer $n$?
\end{question}

Relevant to Question~\ref{question20240913a}, our computation in Example~\ref{ex20240913b} implies the following.

\begin{ex}
Consider the ring $R$ from Example~\ref{ex20240913b}. Define a sequence $\{a_i\}_{i\in \mathbb{N}}$ of positive integers recursively as follows: Let $a_1=0$, $a_2=2$, and for all integers $n\geq 1$ define $a_{2n+1}=4a_{2n-1}+2$ and $a_{2n+2}=4a_{2n}-2$. Then, for all integers $n\geq 1$, the $R$-module $k^{a_n}$ is a direct summand of $\syz^n(k)$. Moreover, $a_n$ is maximal with respect to this property, that is, $a_n$ copies of $k$ occur as direct summands in $\syz^n(k)$.
\end{ex}

According to~\ref{para20240914a}, the next example is a special case of Example~\ref{3} with $S=k[\![y]\!]/(y^n)$. However, since it contains more information relevant to Question~\ref{question20240913a}, we describe it separately here.

\begin{ex}
For a positive integer $n\geq 3$ let $R=k[\![x,y]\!]/(x^2,xy,y^n)$ and note that $R\cong k[\![x]\!]/(x^2)\times_k k[\![y]\!]/(y^n)$; see, for instance, \cite[3.3]{NST}. Then, as we mentioned in Example~\ref{3}, the residue field $k$ is a direct summand of $\syz^2(M)$ for all non-free $R$-modules $M$.
Also, note that $\soc(R)\nsubseteq \fm^2$. Thus, by Lemma~\ref{HVS}, the residue field $k$ is a direct summand of $\fm=\syz^1(k)$ as well. Since $\fm^2\neq 0$, we deduce that $k^{\oplus 2}$ is not a direct summand of $\fm$.
In view of~\cite[Proposition 4.1] {link} we have
$$
\syz^2(k)=\syz^1\fm\cong\m\oplus k^{\oplus2}.
$$
Thus, $k^{\oplus3}$ is a direct summand of $\syz^2(k)$ while $k^{\oplus4}$ is not a direct summand of $\syz^2(k)$. From this, and by an inductive argument for all $n\geq 3$ we have
$$
\syz^n(k)\cong \syz^{n-2}(k^{\oplus2})\oplus \syz^{n-1}(k).
$$
We conclude that, for all integers $n\geq 3$, the number of copies of $k$ appearing as direct summands of $\syz^n(k)$ is given by the sequence $b_n=2b_{n-2}+b_{n-1}$ with $b_1=1$ and $b_2=3$.
\end{ex}

%Regarding the higher syzygies, we have the following example.

\begin{ex}\label{213}
Let $R= S/I\fn$, where $S=k[\![x]\!]$, $\n=(x)$, and $I= (x^3)$ is an ideal of $S$. Using the zero-divisor pair $a= x^2=b$, we can construct an $R$-module $M$ such that $\syz^6(M)= aR$. Note that $\syz^6(M)$ does not admit $k$ as a submodule. In particular, $k$ is not a direct summand of $\syz^6(M)$. Note that by~\cite[Position 2.5(2)]{eda} we have $\B(R)=1$. Hence, this does not contradict~\cite[Theorem A]{DBM}.
\end{ex}

%Next example is an example of finite characteristic.
 
%\begin{ex}
%Assume that $R$ is an artinian $F$-finite local ring of prime characteristic $p$. Then, for every $e\gg 0$, we have $^e\!R\cong k^{\oplus n_e}$, where $n_e=\ell_R(R)[^1\!k : k]^{e}$ with $\ell_R(R)$ being the length of $R$.   
%Indeed, since $R$ is artinian, $\m(^e\!R)\subseteq \m^{[p^e]}=0$ for all $e\gg 0$. Hence, for every $e\gg 0$, there exists $^e\!R\cong k^{\oplus n_e}$, where $n_e=\ell_R(^e\!R)=\ell_R(R)[^1\!k : k]^{e}$, where the last equality follows from~\cite[Lemma %2.15]{ent}. 
%\end{ex}

%\begin{para}\label{para20240914b}
%An ideal $I$ of $R$ is called $\m$-full if there exists an element $x\in \m$ such that the equality $(\m I:x)=I$ holds. As is noted by Watanabe~\cite[Introduction]{Junzo}, such ideals were introduced by Rees in his unpublished works. %Following~\cite{Olgur}, an ideal $J$ of $R$ is called \emph{weakly $\m$-full} if $(\m J:\m)=J$. Clearly, $\m$-full ideals are weakly $\m$-full. It is also shown in~\cite[3.11]{Olgur} that every weakly $\m$-full ideal $J$ with $\depth(R/J)=0$ is a Burch %ideal; again, see~\cite{bur} for the definition of Burch ideal.
%\end{para}

\begin{ex}\label{daoeis}
%Consider $R=S/I$ as in Dao-Eisenbud \cite[Example 4.5]{bur}. Then, there exists an $R$-module $M$ such that $k$ is not a direct summand of $\syz^n_R M$ for any $n\ge 1$. Since $\mu(I)=3=\operatorname{ord}(I)+1$, and $S$ is a regular local ring of %dimension $2$, hence $I$ is $\m$-full by \cite[14.1.4, 14.1.6]{sh}.
Let $R=\mathbb{F}_2[\![x,y,z]\!]/(x,y^2,z^3)^2$ and consider the $R$-module $M=(x,y^2,z^3)R$. Using Macaulay2 and performing some elementary row and column operations, we find that the presentation matrix for $M$ is given by $\begin{pmatrix} x & y^2 & z^3 \end{pmatrix}^{\oplus 3}$. Thus, $\syz^1(M)\cong M^{\oplus 3}$. Consequently, $\syz^n(M)\cong M^{\oplus 3^n}$ for all integers $n\geq 1$. On the other hand, it follows from Lemma~\ref{HVS} that $k$ is not a direct summand of $M$. Therefore, $k$ is not a direct summand of $\syz^n(M)$ for all integers $n\geq 1$ either.
\end{ex}

%\begin{disc}
%Basically, full ideals can be effectively randomly generated by generating a random artinian ideal using a Macaulay inverse system, and then taking the weakly full closure \cite{Heinzer/Ratliff/Rush:2002}.
%\end{disc}

\section{The $R$-module $E^*$ being annihilated by maximal ideal}\label{sec20240911a}

The rest of this paper is devoted to our work on Question~\ref{question20240911a} which is motivated partly by our discussion in~\ref{cor20230918a} and partly by the fact that $E^*\cong \syz^2\left(\Tr E\right)$, where the latter is directly related to our work in the previous sections.
The main results of this section are summarized in Proposition~\ref{g} and Theorem~\ref{42new}.

\begin{para}\label{cor20230918a}
Assume that $R$ is artinian with $\ell\ell(R)=n+1$, for some integer $n\geq 1$. Then, $R$ is Gorenstein if and only if $\fm^{n}E^*\neq 0$. In fact, if $R$ is Gorenstein, then $E \cong R$ and hence, $\fm^n E^* \cong \fm^n \neq 0$. For the converse, if $\fm^n E^* \neq 0$, then there exists $\alpha \in E^*$ such that $\fm^n \im \alpha \neq 0$. The condition $\fm^{n+1} = 0$ forces $\alpha$ to be surjective, and therefore $R$ is a direct summand of $E$. Hence, $R$ is Gorenstein.
%The above discussion, in particular, implies that if $\m E^*=0$, then $R$ is not Gorenstein. Furthermore, if $R$ is not Gorenstein and $\ell\ell(R)=2$, then $\fm E^*=0$.
\end{para}

If $R$ is artinian and $E^*$ is a $k$-vector space, then its vector space dimension can be computed as we show in the next proposition. This result is well-known to experts; see for instance, \cite[Proof of Proposition 3.14]{Lyle}. However, we give the proof for the convenience of the reader.

\begin{prop}\label{siz}
Assume that $R$ is artinian, and let $M$ be an $R$-module with $\m M^{*}=0$. Then, $\dim_k(M^*)=\lambda_R(M)\type(R)$. In particular, if $E^*$ is a $k$-vector space, then $\dim_k(E^*)=\left(\type(R)\right)^2$. 
\end{prop}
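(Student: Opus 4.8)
The plan is to first pass from $M^{*}=\Hom_R(M,R)$ to homomorphisms landing in the socle of $R$. The hypothesis $\m M^{*}=0$ says exactly that $x\varphi=0$ for every $\varphi\in M^{*}$ and every $x\in\m$; that is, $x\cdot\varphi(m)=0$ for all $m\in M$, so $\varphi(M)\subseteq(0:_R\m)=\soc(R)$. Conversely, composing any $R$-linear map $M\to\soc(R)$ with the inclusion $\soc(R)\hookrightarrow R$ produces an element of $M^{*}$, and this composition is injective since the inclusion is. Hence one obtains a $k$-linear isomorphism $M^{*}\cong\Hom_R(M,\soc(R))$.

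Next I would invoke that $R$ is artinian, so $\depth R=0$ and $\soc(R)=\Hom_R(k,R)\cong k^{\oplus\type(R)}$. Substituting,
\[
M^{*}\cong\Hom_R(M,k)^{\oplus\type(R)}\cong\Hom_k(M/\m M,\,k)^{\oplus\type(R)} ,
\]
and since $M/\m M$ is a $k$-vector space of dimension $\lambda_R(M)$ and $k$-linear duality preserves dimension, we get $\dim_k(M^{*})=\lambda_R(M)\,\type(R)$. This proves the first assertion.

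For the final statement, I would apply the first part with $M=E$, leaving only the identity $\lambda_R(E)=\type(R)$ to verify. Since $R$ is artinian, $E=E_R(k)$ has finite length, and Matlis duality gives $E^{\vee}=\Hom_R(E,E)\cong\widehat{R}=R$; as Matlis duality interchanges the minimal number of generators and the type of a finite-length module, $\lambda_R(E)=\type(E^{\vee})=\type(R)$. (Equivalently, $E\cong\omega_R$ is the canonical module, and $\mu(\omega_R)=\type(R)$ is classical.) Therefore $\dim_k(E^{*})=\lambda_R(E)\,\type(R)=\big(\type(R)\big)^{2}$.

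No step is genuinely difficult here; the only points needing care are checking that $M^{*}\cong\Hom_R(M,\soc(R))$ is an isomorphism of $k$-modules (not just an equality of underlying sets), and correctly citing the standard Matlis-duality facts $\Hom_R(E,E)\cong R$ and the exchange of $\lambda_R$ and $\type$ under dualizing that are used to evaluate $\lambda_R(E)$.
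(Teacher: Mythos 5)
Your proof is correct, but it follows a different route from the paper's. You exploit the hypothesis $\m M^{*}=0$ directly at the level of functionals: every $\varphi\in M^{*}$ must land in $(0:_R\m)=\soc(R)$, so $M^{*}\cong\Hom_R(M,\soc(R))\cong\Hom_k(M/\m M,k)^{\oplus\type(R)}$, and the count $\lambda_R(M)\type(R)$ drops out; Matlis duality enters only at the end, to see $\lambda_R(E)=\type(R)$. The paper instead runs everything through Matlis duality from the start: by Hom-tensor adjointness and $E^{\vee}\cong R$ one has $M^{*}\cong(M\otimes_R E)^{\vee}$, the hypothesis says this is $k^{\oplus n}$, so dualizing again gives $M\otimes_R E\cong k^{\oplus n}$, and then $n=\lambda(M\otimes_R E)=\dim_k\bigl((M\otimes_Rk)\otimes_k(E\otimes_Rk)\bigr)=\lambda_R(M)\lambda_R(E)=\lambda_R(M)\type(R)$. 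Your socle argument is more elementary and self-contained for the main assertion (it needs only $\soc(R)\cong k^{\oplus\type(R)}$ and that maps to $k$ factor through $M/\m M$), whereas the paper's duality argument treats both assertions with one uniform mechanism and yields the extra fact that $M\otimes_R E$ is itself a $k$-vector space of the same dimension. The standard facts you cite at the end ($\Hom_R(E,E)\cong R$ for artinian $R$ and the exchange of $\lambda_R$ and $\type$ under $(-)^{\vee}$, equivalently $\mu(\omega_R)=\type(R)$) are exactly what is needed and are consistent with the paper's own use of $E^{\vee}\cong R$.
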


\begin{proof}
Let $n=\dim_k(M^*)$. By the Hom-tensor adjointness and the fact that $E^\vee \cong R$, we have $M^* \cong \left(M \otimes_R E\right)^\vee$. Hence, $k^{\oplus n}\cong \left(M \otimes_R E\right)^\vee$. It follows from taking another Matlis dual that $k^{\oplus n}\cong M \otimes_R E$. Now, we have the equalities
\begin{align*}
n=\lambda_R\left(k^{\oplus n}\right)&=\lambda_R(M \otimes_R E)\\
&=\dim_k(M \otimes_R E\otimes_Rk)\\
&=\dim_k\left((M\otimes_Rk)\otimes_k(E\otimes_Rk)\right)\\
&=\dim_k(M\otimes_Rk)\dim_k(E\otimes_Rk)\\
&=\lambda_R(M)\lambda_R(E)\\
&=\lambda_R(M)\type(R)
\end{align*}
which complete the proof of the proposition. 
\end{proof}

 Next result introduces a large class of artinian local rings satisfying the equality $\m E^*=0$ in Question~\ref{question20240911a}. After the first draft of this paper appeared on arXiv, we were informed by Toshinori Kobayashi that a part of this result was obtained by Ananthnarayan~\cite[Theorem 3.2]{anan} in his computation of the Gorenstein colength. However, we provide different proofs, one of which uses the notion of Eliahou-Kervaire resolution, and the other is a combinatorial proof which uses Macaulay inverse systems and may be of independent interest.

\begin{prop}\label{g}
	Let $R=S/\n^n$ for an integer $n\geq 2$, where $S = k[x_1,\dots,x_e]$ with $e\geq 2$ and $\n=(x_1,\dots,x_e)$. Then, $\m E^*=0$ with
	\begin{equation}\label{eq20240917a}
	\dim_k(E^*)=\binom{e+n-2}{n-1}^2.
	\end{equation}
\end{prop}

Before proving  Proposition~\ref{g}, it is beneficial to see an example.

\begin{ex}\label{ex41}
Following the notation from  Proposition~\ref{g}, let $S=k[x,y]$ and $n=3$, so that $R=k[x,y]/(x,y)^3$. Note that $E\cong \Ext^2_S(R,S)\cong \Ext^1_S(\n^3,S)$. Since $\depth_S(\n^3)=1$, by the Auslander-Buchsbaum formula, $\pd_S(\n^3)=1$. In fact, the minimal free resolution of $\n^3$ is
\begin{equation}\label{eq20240920a}
0\to S^3\xra{A} S^4\to \n^3\lo 0
\end{equation}
in which the map $A$ is a matrix given by
\begin{equation*}
A=\begin{pmatrix}
-y	& 0   &  0  \\
x 	&	-y    &    0\\
0 	&	x    &    -y\\
0 	&	0    &    x
\end{pmatrix}.
\end{equation*}
Applying the functor $\Hom_S(-,S)$ to~\eqref{eq20240920a}, we get an exact sequence
\begin{equation}\label{eq20240920b}
S^{4}\xra{A^{\tr}} S^{3}\to E\to 0
\end{equation}
of $S$-modules. Since $\n^3 E=0$, applying $-\otimes_SR$ to~\eqref{eq20240920b} we obtain a  
presentation 
\begin{equation}\label{eq20240920c}
R^{4}\xra{\overline{A^{\tr}}} R^{3}\to E\to 0
\end{equation}
of $E$. Note that $A$ is of linear type and its entries are not in $\n^3$.
In particular, no column or row of $\overline{A^{\tr}}$ is zero. Thus,
the presentation~\eqref{eq20240920c} is minimal. Applying $(-)^*=\Hom_R(-,R)$ to~\eqref{eq20240920c}, we get an exact sequence
$$
0\to E^*\to R^{3} \xra{\overline{A}} R^4
$$
of $R$-modules, i.e., $E^*=\ker(\overline{A})$.
Hence, $E^*$ is generated by $9$ elements of $R^3$ that form the columns of the matrix
\begin{equation*}
B=\begin{pmatrix}
y^2   	& xy   &  x^2   & 0	  & 0   &  0 & 0   & 0   &  0    \\
0	&	0    &    0&y^2   	& xy   &  x^2  & 0   	& 0   & 0   \\
0 	&	0    &    0&0  	& 0   &  0	& y^2   	& xy   &  x^2  
\end{pmatrix}.
\end{equation*} 
Since $\fm^3=0$ and entries of $B$ are in $\fm^2$, they get annihilated by $\fm$. Thus, $\fm E^* = 0$.
\end{ex}

 Our first proof of Proposition~\ref{g}, uses the Eliahou-Kervaire resolution, which is the minimal resolution of a large class of monomial ideals; see~\cite{EK, MR1407879, peeva} for the definition and properties of the Eliahou-Kervaire resolution. We adopt the notation and conventions of \cite{peeva}, but summarize a few important details and specialize to ideals of the form $(x_1,\dots,x_e)^n$.

\begin{para}\label{para20240916d}
We work in the setting of  Proposition~\ref{g}.
For a monomial $f$, we let $\min(f)$ and $\max(f)$ be the minimum and maximum indices, respectively, of variables dividing $f$.
The basis elements of the minimal free resolution of $\n^n$ may be enumerated by labels $(f;\,j_1,\dots,\,j_i)$, where $f$ is a monomial of degree $n$ and we have the inequalities $1\leq j_1<\dots<j_i<\max(f)$.
The homological degree of such a basis element is given by the sum $j_1+\cdots+j_i$.

Any monomial $f$ of degree at least $n$ can be uniquely factored as $b(f)\cdot g$ with $\max(b(f))\leq \min(g)$ and $\deg(b(f))=n$.
The differential of the Eliahou-Kervaire resolution is given by $\partial=d-\delta$, where $d$ and $\delta$ are defined as follows:
\begin{gather}
\delta(f;j_1,\dots,j_i)=\sum\limits_{\ell=1}^i(-1)^\ell\frac{fx_\ell}{b(fx_\ell)}(b(fx_\ell);j_1,\dots,\hat{j_\ell},j_i)\label{eq20240920d}\\
d(f;j_1,\dots,j_i)=\sum\limits_{\ell=1}^i(-1)^\ell x_i(f;j_1,\dots,\hat{j_\ell},j_i). 
\end{gather}
\end{para}

\noindent \emph{First proof of Proposition~\ref{g}.} We consider the setting of~\ref{para20240916d}. Let $F\xra{\simeq} R$ be the Eliahou-Kervaire resolution of $R$ over $S$ with the last map described by the matrix $A=\partial_e$. For degree reasons, the columns of $A$ are indexed by $(gx_e;1,\dots,e-1)$ where $g$ is any monomial of degree $n-1$. 

For each monomial $g$ of degree $n-1$ and each $x_\ell$ we have the equality $b(gx_\ell x_e)=gx_\ell$. Hence, the formula~\eqref{eq20240920d} for $\delta_e$ simplifies to
$$
\delta_e(gx_e;1,\ldots,e-1)=\sum_{i=1}^{e-1} x_e(gx_i;1,\dots,\hat{i},\dots{e-1}).
$$
Thus, every entry of $A$ is linear. As in Example~\ref{ex41}, it follows that $\overline{A^{\tr}}$ is a minimal presentation matrix for $E$, which implies that $E^*=\ker\left(\overline{A}\right)$, where $\overline{A}=A\otimes_S R$.\vspace{2mm}

\noindent {\bf Claim}: We have the equality $\ker(\overline{A})=(0:_R\m)\overline{F}_e$.\vspace{2mm}

Once we prove the claim, the assertion of  Proposition~\ref{g} follows readily.

Note that the containment $\supseteq$ holds since the entries of $\overline{A}$ are in $\m$. For the reverse containment, we show that a maximal square submatrix of $A$ is triangular with linear forms down the diagonal. Degree reasons then force that if $\overline{A}(b)=0$, then each entry of $\overline{A}(b)$ has degree $n$, whence each entry of $b$ has degree $n-1$. Consequently, $b\in (0:_R\m)\overline{F}_e$, as desired.

To obtain the desired triangular submatrix of $A$, we order the basis elements of the Eliahou-Kervaire resolution lexicographically within each homological degree, giving preference to the monomials over the index sets. Namely, to compare basis elements $(f;j_1,\dots,j_i)$ and $(g;\ell_1,\dots,\ell_i)$, we compare $f$ and $g$ lexicographically, and if $f=g$, we then compare $(j_1,\dots,j_i)$ with $(\ell_1,\dots,\ell_i)$. With this ordering, the smallest basis element appearing with a nonzero coefficient in $\del(gx_e;1,\dots,e-1)$ is $(gx_e;1,\dots,e-2)$. Whenever the column $(g_1x_e;1,\dots, e-1)$ is less than the column $(g_2x_e;1,\dots, e-1)$, it is evident that $(g_1x_e;1,\dots,e-2)$ is less than $(g_2x_e;1,\dots,e-2)$. Selecting the rows $(gx_e;2,\dots,e-1))$, where $g$ ranges over all monomials of degree $n-1$, ordered so that the lowest monomials appear highest in the matrix, then gives a lower triangular maximal submatrix of $A$, as desired.

The equality~\eqref{eq20240917a} follows from Proposition~\ref{siz} and the fact that in this case $\type(R)=\binom{e+n-2}{n-1}$.
\qed

\begin{ex}
Following the notation from~\ref{para20240916d} and the proof of  Proposition~\ref{g}, for $e=4$ and $n=4$ we have
\begin{gather*}
d((x_1x_2^2x_4;1,2,3))=-x_1(x_1x_2^2x_4;2,3)+x_2(x_1x_2^2x_4;1,3)-x_3(x_1x_2^2x_4;1,2)\\
\delta((x_1x_2^2x_4;1,2,3))=x_4\left(-(x_1^2x_2^2;2,3)+(x_1x_2^3;1,3)-(x_1x_2^2x_3;1,2)\right)
\end{gather*}
and we see that the smallest basis element appearing with a non-zero coefficient in $\del((x_1x_2^2x_4;1,2,3))$ is $(x_1x_2^2x_4;1,2)$.
\end{ex}

In addition to satisfying the equality $\m E^*=0$, the ring $R$ in Proposition~\ref{g} is Burch. Here is an example of an artinian
Burch ring $R$ with $\m E^*\neq 0$, which at the same time shows that the assumption $e\geq 2$ in  Proposition~\ref{g} is necessary.

\begin{ex}\label{ex20240917d}
Let $R=k[\![x]\!]/(x^2)$. Then, $R$ is Burch and $\m E^*\cong \m\neq 0$.
\end{ex}

 Our second proof of Proposition~\ref{g} involves relating the combinatorial structure of strongly stable ideals with that of Macaulay inverse systems; see~\cite{MacaulayModular} or~\cite[\textsection~21.2]{CommutativeAlgebra}. Furthermore, this proof may better generalize to other Borel fixed ideals.

\begin{para}\label{para20240920a}
Consider an integer $e\geq 2$. For integers $i,j$ with $1\leq i<j\leq e$, multiplication of a monomial $m$ in $S=k[x_1,\dots,x_e]$ by $x_ix_j^{-1}$ with $x_j$ dividing $m$ defines a \emph{Borel Move}. The \emph{inverse Borel move} is defined to be  $x_jx_i^{-1}$. Moreover, \emph{Borel-fixed ideals} are monomial ideals of the polynomial ring $S$ which are closed under Borel moves.
\end{para}

\begin{para}\label{para20240920b}
Consider the notation from~\ref{para20240920a}, and let $I$ be a monomial ideal of $S$ such that $R=S/I$ is artiniain. Borel moves extend to the inverse system of $I$ as follows:

Let $S$ act on its fraction field $k(x_1,\dots,x_e)$ in the natural way, and view $\omega_S=k[x_1^{-1},\dots,x_e^{-1}]$ as the quotient by $(x_1,\dots,x_e)$ viewed as a submodule of $k(x_1,\dots,x_e)$. For integers $1\leq i<j\leq e$, the Borel move on an inverse monomial $m\in \omega_S$ is computed by multiplying $x_ix_j^{-1}m$ in $k(x_1,\dots,x_e)$ and reducing modulo $S$.
\end{para}

\begin{para}\label{p:InverseBorel}
Consider the notation from~\ref{para20240920b}. Inverse Borel moves in $S$ correspond to Borel moves on $\omega_S$, and this descends to $R$. In fact, Borel moves on ideals of $R$ correspond to inverse Borel moves on $\omega_S$-submodules containing $(0\colon_{\omega_S} I)$, and vice-versa. Note that the latter is isomorphic to $E$ once tensored down to $R$.
\end{para}

The following lemma holds more generally for an arbitrary artinian ring: 

\begin{lem}\label{p:ExactSequence}
Consider an integer $e\geq 2$, and assume that $R$ is artinian. Then, there exists an exact sequence
\begin{equation}\label{eq20240920e}
0\to L\to E^*\to M\to 0
\end{equation}
in which $\m L=0$ with $\dim_k(L)=(\type(R))^2$.
\end{lem}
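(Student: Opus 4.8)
The plan is to take for $L$ the socle of $E^{*}$ and for $M$ the corresponding quotient: put $L=(0:_{E^{*}}\m)=\Hom_R(k,E^{*})$ and $M=E^{*}/L$. Then the sequence \eqref{eq20240920e} exists tautologically and $\m L=0$ holds by construction, so the entire content of the lemma is the dimension count $\dim_k(L)=(\type(R))^{2}$, which is a routine Matlis-duality computation.

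For that count I would first rewrite $L=\Hom_R(k,\Hom_R(E,R))$ and apply Hom-tensor adjunction to get $L\cong\Hom_R(k\otimes_R E,\,R)=\Hom_R(E/\m E,\,R)$. Next I would identify $E/\m E$: since $R$ is artinian it is complete, so $E^{\vee}\cong R$ (the fact already used in the proof of Proposition~\ref{siz}), and hence $(E/\m E)^{\vee}\cong\Hom_R(R/\m,\Hom_R(E,E))=\Hom_R(R/\m,R)=\soc(R)\cong k^{\oplus\type(R)}$, so that $E/\m E\cong k^{\oplus\type(R)}$. Substituting back gives $L\cong\Hom_R(k,R)^{\oplus\type(R)}=\soc(R)^{\oplus\type(R)}\cong k^{\oplus(\type(R))^{2}}$, which is exactly what is needed. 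As an alternative packaging, one may instead identify $E^{*}\cong(E\otimes_R E)^{\vee}$ (again via $E^{\vee}\cong R$ and adjunction) and dualize the canonical surjection $E\otimes_R E\onto(E\otimes_R E)\otimes_R k\cong(E\otimes_R k)\otimes_k(E\otimes_R k)\cong k^{\oplus(\type(R))^{2}}$; the resulting short exact sequence is \eqref{eq20240920e} with $L=k^{\oplus(\type(R))^{2}}$ and $M=\bigl(\m(E\otimes_R E)\bigr)^{\vee}$, and this $L$ is again the socle of $E^{*}$.

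I do not anticipate a genuine obstacle: every step is a standard adjunction or Matlis-duality manipulation. The only points requiring care are invoking $E^{\vee}\cong R$ (legitimate because $R$ is artinian, hence complete), and remembering that $\lambda_R$ in this paper is the minimal number of generators, so that $\mu_R(E)=\dim_k(E/\m E)=\type(R)$; the base-change identity $k\otimes_R E\otimes_R E\cong(E\otimes_R k)\otimes_k(E\otimes_R k)$ needed for the parenthetical alternative is the same computation carried out in the proof of Proposition~\ref{siz}.
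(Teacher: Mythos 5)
Your proof is correct and is essentially the paper's argument: your $L=\soc(E^*)=\Hom_R(k,E^*)$ is exactly the image of the dualized minimal surjection $E\onto k^{\oplus \type(R)}$ that the paper uses (a functional on $E$ is annihilated by $\m$ if and only if it kills $\m E$), and both proofs reduce the dimension count to $\Hom_R(E/\m E,R)\cong \soc(R)^{\oplus \type(R)}\cong k^{\oplus (\type(R))^2}$. The only cosmetic difference is that you verify $\mu_R(E)=\dim_k(E/\m E)=\type(R)$ explicitly via Matlis duality, a fact the paper's proof takes for granted.
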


\begin{proof}
Since $E$ is generated by $\type(R)$-many elements, there exists a surjection $E\to k^{\oplus \type(R)}$, which dualizes to an injection $L=(k^*)^{\oplus \type(R)} \to E^*$. Note that $k^*\cong \soc(R)$ and $\dim_k(k^*)=\type(R)$. Therefore, $\dim_k(L)=(\type(R))^2$.
\end{proof}

%\begin{disc} Let's present another argument for Proposition \ref{p:ExactSequence}. Since $\mu(\omega_R)=r$, hence there exists a surjection $\omega_R \to k^{\oplus r} \to 0$, hence there exists an injection $0\to \Hom_R(k,R)^{\oplus r} \to \omega_R^*$, %where $\dim_k \Hom_R(k, R)^{\oplus r}=r^2$. \end{disc}

\begin{para}\label{p:BorelGeneration}
Under the setting of  Proposition~\ref{g}, $\n^{n-1}$ is closed under Borel and inverse Borel moves, and hence, so is $E$. Applying Borel moves and inverse Borel moves to a minimal generator of $E$ yields another minimal generator of $E$ and moreover, every minimal generator of $E$ can be obtained from any other by a sequence of Borel and/or inverse Borel moves. These assertions of closure and preservation of minimal generators follow from the fact that Borel moves preserve degrees.
\end{para}

\begin{lem}\label{lem20240920a}
Consider the setting of  Proposition~\ref{g}. The module $M$ introduced in Lemma~\ref{p:ExactSequence} is zero.
\end{lem}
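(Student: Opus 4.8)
The plan is to reduce the equality $M=0$ to the purely combinatorial assertion $\m(E\otimes_R E)=0$, and then to establish the latter by transporting inverse monomials across the tensor product by means of (inverse) Borel moves. For the reduction, note first that by the argument in the proof of Proposition~\ref{siz} (applied with $M=E$, using $E^\vee\cong R$ and Hom-tensor adjointness) one has $E^*\cong(E\otimes_R E)^\vee$, so that $\dim_k(E^*)=\dim_k(E\otimes_R E)$. Next, $E\otimes_R E\otimes_R k\cong(E\otimes_R k)\otimes_k(E\otimes_R k)\cong k^{\oplus(\type(R))^2}$, so $E\otimes_R E$ is minimally generated by the $(\type(R))^2$ elements $x^{-a}\otimes x^{-b}$, where $x^{-a},x^{-b}$ run over the minimal generators of $E$ in the inverse-system model of~\ref{p:InverseBorel}, namely the inverse monomials of degree $n-1$. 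Hence, once $\m(E\otimes_R E)=0$ is proved, $E\otimes_R E$ is a quotient of $k^{\oplus(\type(R))^2}$, so $\dim_k(E^*)\le(\type(R))^2$; combined with the injection $L\hookrightarrow E^*$ and the equality $\dim_k(L)=(\type(R))^2$ from Lemma~\ref{p:ExactSequence}, this forces $L=E^*$, that is, $M=0$.

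It remains to show $x_i\cdot(x^{-a}\otimes x^{-b})=0$ in $E\otimes_R E$ for every variable $x_i$ and all exponent vectors $a,b$ with $|a|=|b|=n-1$. Write $u_\ell$ for the $\ell$-th unit vector; recall that $x_i$ acts on $E$ by $x_i\cdot x^{-c}=x^{-(c-u_i)}$ when $c_i\geq1$ and $x_i\cdot x^{-c}=0$ when $c_i=0$, and that $x^{-c}=x_j\cdot x^{-(c+u_j)}$ whenever $|c+u_j|\leq n-1$. Using $x_i(x^{-a}\otimes x^{-b})=(x_i x^{-a})\otimes x^{-b}=x^{-a}\otimes(x_i x^{-b})$, one argues by cases. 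If $a_i=0$ or $b_i=0$, the conclusion is immediate. If $b_j=0$ for some $j\neq i$, rewrite $x_i x^{-a}=x^{-(a-u_i)}=x_j\cdot x^{-(a-u_i+u_j)}$ and slide $x_j$ onto the other factor to reach $x_j x^{-b}=0$; symmetrically if $a$ has a zero entry off of $i$. The remaining case is that $a$ and $b$ both have full support (so $n-1\geq e$) and $a_i,b_i\geq1$; here I iterate a move which, applying the rewriting rule twice, transfers one unit from the $i$-th entry of the left generator to the $i$-th entry of the right generator while shifting one unit in some coordinate $p\neq i$ with $b_p\geq1$, i.e.\ replaces the pair $(a,b)$ by $(a-u_i+u_p,\;b-u_p+u_i)$ without changing $x_i(x^{-a}\otimes x^{-b})$. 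Since the $i$-th entry of the left generator strictly decreases, after finitely many such moves either the right generator loses full support (and we are in the case already treated) or the $i$-th entry of the left generator becomes $1$, at which point one further round of rewriting on each factor exhibits a summand of the form $x_i\cdot x^{-c}$ with $c_i=0$, hence $0$. Each move here is precisely an (inverse) Borel move carried out inside $E\otimes_R E$, as anticipated in~\ref{p:BorelGeneration}.

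The main obstacle is this last ``doubly full support'' case, where I must check that the iteration terminates and that every exponent vector produced along the way remains admissible. Both are handled by the bookkeeping that each move preserves $|a|=|b|=n-1$, that nonnegativity of the entries being decremented follows from the running support hypotheses, and that the (strictly decreasing, nonnegative) $i$-th entry of the left generator furnishes a termination measure. I expect this to be a short but slightly delicate verification, and it is the only place where the precise form of the ideal $\n^n$---beyond its being Borel-fixed---enters.
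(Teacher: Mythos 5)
Your argument is correct, but it takes a genuinely different route from the paper's. The paper proves the lemma by working directly with a homomorphism $\phi\in E^*$: for socle generators $m,m'$ related by a Borel move one has the relation $x_i\phi(m')=x_j\phi(m)$, and since Borel moves act transitively on the inverse monomials of degree $n-1$ (as in~\ref{p:BorelGeneration}) while each pure power $x_j^{-(n-1)}$ is annihilated by every $x_i$ with $i\neq j$, the vanishing of $x_i\phi(m)$ propagates to all variables and all generators, i.e.\ every $\phi$ kills $\m E$ and hence lies in $L$. You instead dualize: using the adjunction step from Proposition~\ref{siz} you identify $E^*$ with $(E\otimes_R E)^\vee$, reduce the lemma to $\m(E\otimes_R E)=0$, and prove that by an explicit exchange-move calculus on simple tensors $x^{-a}\otimes x^{-b}$, with the strictly decreasing $i$-th entry of the left exponent vector as a termination measure, finishing with a length count against $\dim_k(L)=(\type(R))^2$ from Lemma~\ref{p:ExactSequence}. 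There is no circularity (you never invoke Theorem~\ref{g}), and your rewriting moves are exactly the relations $(re)\otimes e'=e\otimes(re')$ applied to inverse monomials, so the case analysis and termination are complete as written; the only cosmetic point is that $\dim_k(E^*)=\dim_k(E\otimes_R E)$ should be read as an equality of lengths until $\m(E\otimes_R E)=0$ is established. What each approach buys: yours is more explicit about why the propagation terminates (the paper's transitivity argument is terse, and the annihilating variable changes as one moves along Borel moves), it proves $\m E^*=0$ and the dimension bound in one stroke, and it avoids having to identify $L$ inside $E^*$; the paper's proof is shorter, stays entirely at the level of $E^*$ without tensor products, and isolates the transitivity of Borel moves as the conceptual input, which is the feature the authors expect to generalize to other Borel-fixed ideals.
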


\begin{proof}
Note that $\soc(R)=\n^{n-1}$, and so $E$ is generated by all inverse monomials of degree $-n+1$. Given two generators $m$ and $m'$ of $E$ which are related by an (inverse) Borel move $m'=x_jx_i^{-1}m$, for any element $\phi\in E^*$ we have $x_i\phi(m)=x_j\phi(m')$.
If $m$ is killed by $x_i$, this shows that $x_j$ kills $\phi(m')$. However, for the ideal $\n^n$, Borel moves and their inverses act transitively on the socle generators.
Since each of the corresponding inverse monomials $x_j^{-(n-1)}$ are killed by $x_i$ for all $i\neq j$, this shows that provided $e\geq 2$, the element $\phi(m)$ is killed by $x_i$ for all $i$ and all socle generators $m$, as desired. 
\end{proof}

\noindent \emph{Second proof of  Proposition~\ref{g}.} This follows readily from Lemmas~\ref{p:ExactSequence} and~\ref{lem20240920a}.\qed

We present an example which uses the notation $\tr_R(-)$ from~\ref{para20240911b}.
\begin{ex}
Let $R=S/(f_1,\ldots,f_d)^n$, where $S=k[x_1,\ldots,x_d]$ with $d>1$ and $f_1,\ldots,f_d$ is a system of parameters. Then, $R$ is a Golod ring and $\frak \fm E^\ast=0$ if and only if $\deg(f_i)=1$. In fact, it follows from~\cite[Theorem 3.13]{anan} that
$\tr_R(E) = (f_1,\ldots,f_d)^{n-1}/(f_1,\ldots,f_d)^n$. This implies that $\frak m E^\ast=0$ if and only if $\frak m (f_1,\ldots,f_d)^{n-1}\subseteq (f_1,\ldots,f_d)^n$, which gives the assertion.
\end{ex}

The rest of this section is devoted to prove Theorem~\ref{42new}. To this end, we start with the following general lemma that will be used later in this section.

%The following general lemma will be used later in this section.

\begin{lem}\label{lem20250129a}
Let $M$ be a finite length $R$-module such that $\soc(M)$ is 1-dimensional. If there exist an $R$-module $N$ and morphisms $\eta\colon k \to M$ and $\alpha\colon M \to N$ such that $\alpha \eta\neq 0$, then $\alpha$ is injective.
\end{lem}

\begin{proof}
Note that $\eta \neq 0$ and hence, $\eta(1)\neq 0$. Moreover, since $\eta(1)\in \soc(M)$ and $\soc(M)$ is 1-dimensional, we have $\soc(M)=R\eta(1)=\im \eta$. Let $0\ne x \in M$. Since $\soc(M)$ is an essential submodule of $M$, we have $Rx \cap R \eta(1)\neq 0$, i.e., there exist elements $r_x,s_x\in R$ such that $r_xx=s_x\eta(1)\neq 0$. Since $\eta(\m)=0$, we have $s_x\notin \m$ and thus, $s_x^{-1}\in R$ with $\eta(1)=s_x^{-1}r_xx$. Therefore, $s_x^{-1}r_x\alpha(x)=\alpha(\eta(1))$ which is non-zero by our assumption. Hence, $\alpha(x)\neq 0$ and since $x\in M$ was arbitrary, this proves $\alpha$ is injective, as desired.
\end{proof}

\begin{para}\label{conv20250129a}
Let $R=S\times_k T$ be the fiber product of the artinian local rings $(S,\m_S,k)$ and $(T,\m_T,k)$ with the canonical modules $E_R$, $E_S=E_S(k)$, and $E_T=E_T(k)$; note that $R$ is also artinian with maximal ideal $\m_R=\m_S\oplus \m_T$ and we have the $R$-module isomorphisms $S\cong R/\m_T$ and $T\cong R/\m_S$. Consequently, any $S$-module $M$ is annihilated by $\m_T$ as an $R$-module, and similarly,  any $T$-module $N$ is annihilated by $\m_S$ as an $R$-module. In fact, $R$ fits in the pull-back diagram 
\begin{equation}\label{eq20250129a}
\begin{split}
\xymatrix{
R\ar[r]\ar[d]&T\ar[d]^{\pi_T}\\
S\ar[r]^{\pi_S}&k
}
\end{split}\end{equation}
where $S\xra{\pi_S} k\xla{\pi_T}T$ are the natural surjections.
By~\cite{og}, we know that $E$ fits into the push-out diagram
\begin{equation}\label{eq20250129b}
\begin{split}
\xymatrix{
k\ar[r]^{\eta_S}\ar[d]_{\eta_T}&E_S\ar[d]^{\iota_S}\\
E_T\ar[r]^{\iota_T}&E_R
}
\end{split}\end{equation}
arising from applying $(-)^{\vee}$ to~\eqref{eq20250129a}. We also note that if $R$ is not Gorenstein, i.e., $R$ is not a summand of $E_R$, then 
\begin{equation}\label{eq20250129d}
E_R^*=\Hom_R(E_R,\m_R)\cong \Hom_R(E_R,\m_S)\oplus \Hom_R(E_R, \m_T).
\end{equation}
\end{para}

\begin{lem}\label{lem20250129d}
Consider the setting of~\ref{conv20250129a}. If $S$ is not a field, then $\Hom_R(E_S,\m_T)$ is a direct summand of $\Hom_R(E_R,\m_T)$. In particular, if $R$ is not Gorenstein, then $E_R^*$ has a nonzero direct summand annihilated by $\m_R$ which has dimension $\type(S)\type(T)$ over $k$.
\end{lem}

\begin{proof} Throughout the proof, we will write $E$ for $E_R$. 
If $T$ is a field, then $\mathfrak{m}_T = 0$, and the statement is trivial. Hence, to proceed, we assume that $T$ is not a field. Let $\alpha \in \Hom_R(E_S, \mathfrak{m}_T)$ and note that $E_S$ is indecomposable and not simple because $S$ is not regular. Thus, $\soc(E_S) \subseteq \mathfrak{m}_S E_S$. Since $\mathfrak{m}_S\mathfrak{m}_T=(0)$, we conclude that $\alpha \eta_S = 0$. In fact,
$$
\im(\alpha \eta_S) = \alpha(\soc(S)) \subseteq \alpha(\mathfrak{m}_S E_S) = \mathfrak{m}_S \alpha(E_S) \subseteq \mathfrak{m}_S \mathfrak{m}_T = (0).
$$
Therefore, by the universal property of push-out, we obtain the existence of a map $\tilde{\alpha}$ such that the diagram
\begin{equation}\label{PushoutDiagram}
\begin{tikzcd}
\m_T &                                                 &                                              \\
     & E_R \arrow[lu, dashed, "\widetilde\alpha"]                         & E_T \arrow[l,"\iota_T",swap] \arrow[llu, "0"', bend right] \\
     & E_S \arrow[u,"\iota_S"] \arrow[luu, "\alpha", bend left] & k \arrow[l,"\eta_S"] \arrow[u,"\eta_T", swap]                       
\end{tikzcd} 
\end{equation}
commutes. The map $\widetilde\alpha$ is also obtained by the universal mapping property of quotient modules via the commutative diagram
\begin{equation}\label{eq20250129c}
\begin{split}
\xymatrix{
0\ar[r]&k\ar[rr]^{\left(\begin{smallmatrix}\eta_S\\ \eta_T\end{smallmatrix}\right)}\ar[d]&&E_S \oplus E_T\ar[d]^{\left(\begin{smallmatrix}\alpha& 0\end{smallmatrix}\right)}\ar[r]&E_R\ar[r]\ar[d]^{\widetilde\alpha}& 0\\
&0\ar[rr]&&\m_T \ar[r]&\m_T&
}
\end{split}
\end{equation}
Therefore, the map $\Hom_R(E_S, \mathfrak{m}_T) \xra{\zeta}  \Hom_R(E, \mathfrak{m}_T)$ given by $\zeta(\alpha)=\widetilde{\alpha}$ is $R$-linear. 
We also have a map
$$
\Hom_R(E, \mathfrak{m}_T) \xra{\vartheta} \Hom_S(E_S, \mathfrak{m}_T)
$$
defined by $\vartheta(\beta)=\beta \iota_S$, which is a left inverse for $\zeta$, that is,
$\widetilde{\alpha} \iota_S = \alpha$ for all $\alpha \in \Hom_R(E_S, \mathfrak{m}_T)$. This can be verified by examining the bottom-left triangle of diagram~\eqref{PushoutDiagram}. 
Hence, $\Hom_R(E_S, \mathfrak{m}_T)$ is a direct summand of $\Hom_R(E, \mathfrak{m}_T)$. Finally, as $R$-modules, $E_S$ and $\mathfrak{m}_T$ are annihilated by $\mathfrak{m}_T$ and $\mathfrak{m}_S$, respectively. Thus, $\Hom_R(E_S, \mathfrak{m}_T)$ is annihilated by $\mathfrak{m}_R$. Since $R$ is artinian, it follows that $\Hom_R(E_S, \mathfrak{m}_T)\neq 0$. Now, the assertion follows from~\ref{conv20250129a}.

That $\Hom_R(E_S,\m_T)$ is killed by $\m_R$ means that the image of each map $\alpha\in\Hom_R(E_S,\m_T)$ is contained in $\m_T\cap \soc(R)$.
Since $\soc(R)\cong\soc(S)\oplus\soc(T)$, $\m_T\cap\soc(R)\cong k^{\type(T)}$.
Hence $\Hom_R(E_S,\m_T)\cong\Hom_R(E_S,k^{\type(T)})\cong k^{\type(S)\type(T)}$, which gives the final assertion.
\end{proof}

\begin{lem}\label{lem20250129v}
Consider the setting of~\ref{conv20250129a}. If the ring $T$ is not Gorenstein, then $\Hom_R(E_T,\m_T)$ is a direct summand of $\Hom_R(E_R,\m_T)$, and 
\begin{equation}\label{eq20250129f}
\Hom_R(E_R,\m_T)\isom\Hom_R(E_S,\m_T)\oplus\Hom_R(E_T,\m_T)\cong\Hom_R(E_S,\m_T)\oplus\Hom_T(E_T,\m_T).
\end{equation}
\end{lem}
\begin{proof} Throughout the proof, we will write $E$ for $E_R$. 
If $S$ is a field, the statement is trivial.
Hence, assume that $S$ is not a field.
Since $T$ is artinian and not Gorenstein, we know that $E_T$ cannot be embedded in $T$.
Thus, for any $\alpha \in \Hom_T(E_T, T)$, by Lemma~\ref{lem20250129a}, we must have $\alpha \eta_T = 0$. Therefore, the outer square of the diagram
    \begin{equation}\label{PushoutDiagram2}
\begin{tikzcd}
\m_T &                                                 &                                              \\
     & E_R \arrow[lu, dashed, "\widetilde\alpha"]                         & E_T \arrow[l,"\iota_T",swap] \arrow[llu, "\alpha"', bend right] \\
     & E_S \arrow[u,"\iota_S"] \arrow[luu, "0", bend left] & k \arrow[l,"\eta_S"] \arrow[u,"\eta_T", swap]                       
\end{tikzcd} 
\end{equation}
commutes, giving rise to $\tilde{\alpha}$ by the universal property of the push-out. Similar to Lemma~\ref{lem20250129d}, the left inverse of $\alpha \mapsto \widetilde{\alpha}$ is precomposition by $\iota_T$, so that $\Hom_R(E_T, \mathfrak{m}_T)$ is a direct summand of $\Hom_R(E_R, \mathfrak{m}_T)$. To identify its complement, we observe that the image of $\Hom_R(E_R, \mathfrak{m}_T)$ intersects trivially with the direct summand $\Hom_R(E_S, \mathfrak{m}_T)$, identified by Lemma~\ref{lem20250129d} (where $S$ is not a field, as that case was trivial). Applying $\Hom_R(-, \mathfrak{m}_T)$ to the exact sequence
$$
0 \to k \to E_S \oplus E_T \to E_R \to 0
$$
by exactness, we obtain the inequality
$$
\ell_R(\Hom_R(E_R, \mathfrak{m}_T)) \leq \ell_R(\Hom_R(E_S \oplus E_T, \mathfrak{m}_T)).
$$
Also, note that both $\Hom_R(E_S, \mathfrak{m}_T)$ and $\Hom_R(E_T, \mathfrak{m}_T)$ are direct summands of, and hence, inject into $\Hom_R(E_R, \mathfrak{m}_T)$, and furthermore, their intersection is zero. Thus, the isomorphism~\eqref{eq20250129f} follows.
\end{proof}

The following result follows directly from~\eqref{eq20250129d} and Lemmas~\ref{lem20250129d} and~\ref{lem20250129v}.

\begin{thm}\label{42new}
Consider the setting of~\ref{conv20250129a}.
If $S$ and $T$ are not Gorenstein, then $$E_R^*\cong \Hom_R(E_S\oplus E_T,\m_S\oplus\m_T)\cong \Hom_S(E_S,S)\oplus \Hom_T(E_T,T)\oplus k^{\oplus2\type(S)\type(T)}$$
In particular, $E_S^*$ and $E_T^*$ are annihilated by $\m_S$ and $\m_T$, respectively, if and only if $E_R^*$ is annihilated by $\m_R$. 
\end{thm}
\begin{proof}
Applying Lemma~\ref{lem20250129v} gives the first isomorphism, which implies 
    \[E_R^*\cong\Hom_R(E_S,\m_S)\oplus\Hom_R(E_S,\m_T)\oplus\Hom_R(E_T,\m_S)\oplus\Hom_R(E_T,\m_T).\]
    
Since $S,T$ are homomorphic images of $R$ and neither are Gorenstein, $\Hom_R(S, \m_S)=\Hom_S(S, \m_S)=\Hom_S(E_S,S)$ and similarly for $T$.  
This fact, combined with applying Lemma~\ref{lem20250129d} to the terms $\Hom_R(E_S,\m_T)$ and $\Hom_R(E_T,\m_S)$, gives the second desired isomorphism.

Recalling that $\m_R=\m_S\oplus\m_T$, $\Hom_S(E_S,S)$ and $\Hom_T(E_T,T)$ are annihilated by $\m_S$ and $\m_T$, respectively, if and only if both of them are annihilated by $\m_R$, if and only if  $$E_R^*\cong \Hom_S(E_S,S) \oplus \Hom_T(E_T,T) \oplus k^{2\type(S)\type(T)}$$ is annihilated by $\m_R$.  
 
% Thus, $E_R^*\cong \Hom_S(E_S,S) \oplus \Hom_T(E_T,T) \oplus \Hom_R(E_T,\m_S)\oplus \Hom_R(E_S,\m_T)$. $S,\m_S$ are annihilated by $\m_T$ as $R$-modules, and $T,\m_T$ are annihilated by $\m_S$ as $R$-modules, thus, remembering $\m_R=\m_S \oplus \m_T$, we get $\Hom_R(E_T,\m_S)\oplus \Hom_R(E_S,\m_T)$ is annihilated by $\m_R$, and $\Hom_S(E_S,S)$ and $\Hom_T(E_T,T)$ are annihilated by $\m_T$ and $\m_S$ respectively. Thus,  $\Hom_S(E_S,S)$ and $\Hom_T(E_T,T)$ are annihilated by $\m_S$ and $\m_T$ respectively if and only if both of them are annihilated by $\m_R$, if and only if  $$E_R^*\cong \Hom_S(E_S,S) \oplus \Hom_T(E_T,T) \oplus \Hom_R(E_T,\m_S)\oplus \Hom_R(E_S,\m_T)$$ is annihilated by $\m_R$. 
    
    % The final statement comes from observing that $\m_R$ annihilates a direct sum if and only if it annihilates each summand, and annihilates an $S$-module ($T$-module) if and only if $\m_S$ ($\m_T$) annihilates that module.

\end{proof}

\section{Properties of artinian rings with $E^*$ being a $k$-vector space}\label{sec20240911d}

In Proposition~\ref{siz}, we showed that, over an artinian local ring $R$, if $E^*$ is a $k$-vector space, then its vector space dimension equals $\left(\type(R)\right)^2$. In this section, we investigate more consequences of $E^*$ being a $k$-vector space.

\begin{para}\label{para20240911b}
The \emph{trace} of an $R$-module $M$, denoted $\tr_R(M)$, is defined to be
$$
\tr_R(M)=\sum_{f\in M^*}\im(f).
$$
Following~\cite{HHS}, a Cohen–Macaulay local ring $R$ with a canonical module $\omega_R$ is called \emph{nearly Gorenstein} if $\m\subseteq \tr_R(\omega_R)$.
\end{para}
 
Lyle and Maitra~\cite[Theorem 4.10]{Lyle}, in their work on Tachikawa Conjecture, prove that a local Cohen-Macaulay numerical semigroup ring $R$ with a canonical module $\omega_R$ and of minimal multiplicity is nearly Gorenstein if $\m \Ext^{i>0}_R(\omega_R,R)=0$. Our first goal in this section is to prove part~\eqref{int13a} of Theorem~\ref{int13}, which classifies the artinian non-Gorenstein nearly Gorenstein local rings under the assumption that $\m E^*=0$, i.e., $\m \Ext^0(\omega_R,R)=0$. For the proof, we build a systematic approach to the condition $\fm E^*=0$ starting from the following discussion.

\begin{para}\label{cor20230918b}
Let $M$ be n module over a local ring $R$. The following statements are well-known and straightforward to verify.
\begin{enumerate}[\rm(a)]
\item\label{cor20230918b1}
$\ann_R(M)=\ann_R(M^\vee)$.
\item\label{cor20230918b2}
A cyclic $R$-module is annihilated by $\fm$ if and only if it is isomorphic to $k$.
\item\label{cor20230918b3}
Using the evaluation map $f$ from~\ref{para20240915v}, the $R$-module $M^*$ is isomorphic to a direct summand of the $R$-module $M^{***}$.
\item\label{cor20230918b4}
If $M$ does not have a non-zero free summand, then we have the isomorphism $\Hom_R(M,\fm)\cong \Hom_R(M,R)$.
\item\label{lem20230918a}
	Let $\theta\colon k\to E$ be the inclusion map, and let $f\colon k\to E$ be an injective homomorphism. Then, $f=a\theta$ for some $a\in R\setminus \fm$.
\end{enumerate}
\end{para}

\begin{para}\label{tinytr} Any nonzero $R$-module $M$ surjects onto $R/\m$, and hence $\soc(R)=\tr_R(k)\subseteq \tr_R(M)$. We now record here that $\m M^*=0$ holds if and only if $\tr_R(M)=\soc(R)$. Indeed, for any $f\in M^*$, $\m f=0 \iff \m \operatorname{Im}(f)=0 \iff \operatorname{Im}(f)\subseteq \soc(R)$, hence we are done by definition of trace, and the  inclusion noted previously.  
\end{para}

\begin{lem}\label{prop20230918d}
The isomorphism $\Ext^1_R(\Tr E,R)^\vee\cong R/ \tr_R(E)$ holds. In particular, 
$\ann \Ext^1_R(\Tr E,R)=\tr_R(E)$.
\end{lem}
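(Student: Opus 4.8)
The plan is to compute the cokernel of $(-)^*$ applied to a minimal free presentation of $E$, identify it with $\Tr E$ as in \ref{para20240911c}, and then Matlis-dualize. Start with a minimal free presentation $F_1 \xra{\varpi} F_0 \to E \to 0$. Applying $(-)^* = \Hom_R(-,R)$ gives the exact sequence $0 \to E^* \to F_0^* \xra{\varpi^*} F_1^* \to \Tr E \to 0$, so in particular there is a short exact sequence $F_0^* \xra{\varpi^*} F_1^* \to \Tr E \to 0$. Now apply $(-)^* = \Hom_R(-,R)$ once more to the presentation $F_1^* \xra{?} F_1^* \to \Tr E \to 0$; more precisely, take the presentation $F_0^* \xra{\varpi^*} F_1^* \to \Tr E \to 0$ and apply $\Hom_R(-,R)$ to obtain $0 \to (\Tr E)^* \to F_1^{**} \xra{\varpi^{**}} F_0^{**}$, which under the canonical identifications $F_i^{**} \cong F_i$ recovers $0 \to (\Tr E)^* \to F_1 \xra{\varpi} F_0$, whence $(\Tr E)^* \cong \ker \varpi$. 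This is the standard fact that $\Ext^1_R(\Tr M, R)$ sits in the exact sequence $0 \to \Ext^1_R(\Tr M, R) \to M \to M^{**} \to \Ext^2_R(\Tr M, R) \to 0$ coming from applying $\Hom_R(-,R)$ twice; in particular $\Ext^1_R(\Tr E, R) = \ker(\delta_E\colon E \to E^{**})$.

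The key step is then to identify $\ker(\delta_E\colon E \to E^{**})$ with $R/\tr_R(E)$ — or rather its Matlis dual with $R/\tr_R(E)$. Here I would use the evaluation pairing: the image of the natural map $E \otimes_R E^* \to R$ is by definition $\tr_R(E)$. Dualizing the inclusion $\tr_R(E) \into R$ and using that Matlis duality is exact and contravariant, one gets $(R/\tr_R(E))^\vee \cong \ker\big(R^\vee \to \tr_R(E)^\vee\big)$. Now $R^\vee = E$, and the map $E = R^\vee \to \tr_R(E)^\vee$ should be identified, via the adjunction $\Hom_R(E, \tr_R(E)^\vee) \cong \Hom_R(E \otimes_R \tr_R(E), E) $ and the universal property of the trace, with precisely $\delta_E$ composed with the appropriate identification of $E^{**}$ — concretely, $E^{**} = \Hom_R(E^*, E^\vee{}^\vee)$-type manipulations, or more cleanly: $\delta_E\colon E \to E^{**} = \Hom_R(E^*, R)^\vee{}\!$... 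Let me instead argue that $\ker \delta_E = (0:_E \tr_R(E))$ directly: an element $x \in E$ lies in $\ker \delta_E$ iff $\varphi(x) = 0$ for all $\varphi \in E^*$, i.e. iff $x$ is killed by every element of $E^*$ under the evaluation pairing, i.e. iff $\tr_R(E)\cdot(\text{stuff})$... this needs $E^* \otimes E$ again. The cleanest route: $\ker\delta_E = \{x \in E : \varphi(x)=0 \ \forall \varphi \in E^*\}$, and $(R/\tr_R(E))^\vee \cong \{x \in R^\vee = E : (\tr_R(E)) x = 0\} = (0:_E \tr_R(E))$; so it suffices to show $\ker\delta_E = (0:_E \tr_R(E))$, which follows because $\varphi(x) = 0$ for all $\varphi \in E^*$ is equivalent to $x$ annihilating all of $\tr_R(E) = \sum_\varphi \im\varphi$ — one direction since each $\varphi(x) \in \tr_R(E)$, wait, one wants $\tr_R(E) x = 0$; since $\tr_R(E)$ is an ideal and $\varphi(x) \in \tr_R(E)$ whenever... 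Hmm — the correct statement is $\tr_R(E)\cdot x = 0$ iff $\psi(r x) = 0$ for all $\psi, r$, iff $\psi(x) \in (0:r)$... I will sort this out using that $\tr_R(E)$ kills $R/\tr_R(E)$, so $\tr_R(E)(R/\tr_R(E))^\vee = 0$ forces $(R/\tr_R(E))^\vee \subseteq (0:_E \tr_R(E))$, and equality by a length/socle count or by the surjection $R^\vee \onto \tr_R(E)^\vee$ dualizing $\tr_R(E) \into R$ being exactly the one whose kernel is the dual of the cokernel $R/\tr_R(E)$.

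The \textbf{main obstacle} is pinning down the identification of the map $E = R^\vee \to \tr_R(E)^\vee$ with $\delta_E$ (equivalently, proving $\ker\delta_E = (0:_E\tr_R(E))$ cleanly), since both involve double-dual bookkeeping and one must be careful that the evaluation map $E \otimes_R E^* \to R$ really has image $\tr_R(E)$ and that its "transpose" is $\delta_E$. Once that is in hand, the Matlis dual of $0 \to \Ext^1_R(\Tr E, R) \to E \xra{\delta_E} E^{**}$ combined with $\coker(\tr_R(E)\into R) = R/\tr_R(E)$ gives $\Ext^1_R(\Tr E, R)^\vee \cong R/\tr_R(E)$ directly. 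The "in particular" then follows from \ref{cor20230918b}\eqref{cor20230918b1}: $\ann_R \Ext^1_R(\Tr E, R) = \ann_R \Ext^1_R(\Tr E, R)^\vee = \ann_R(R/\tr_R(E)) = \tr_R(E)$.
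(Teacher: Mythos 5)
Your overall route is genuinely different from the paper's: you reduce to the Auslander--Bridger exact sequence, identifying $\Ext^1_R(\Tr E,R)$ with $\ker(\delta_E\colon E\to E^{**})$, and then try to match this kernel with $(R/\tr_R(E))^\vee\cong(0:_E\tr_R(E))$ and Matlis-dualize. The problem is that the step you yourself flag as ``the main obstacle'' is exactly where all the content of the lemma sits, and your sketch never closes it. What you actually establish is only the formal identification $(R/I)^\vee\cong(0:_EI)$ for $I=\tr_R(E)$; the key assertion --- that $\ker(\delta_E)=(0:_E\tr_R(E))$, equivalently that Matlis-dualizing $\tr_R(E)\into R$ yields, under $R^\vee\cong E$, the biduality map $\delta_E$ --- is left as a hope, supported only by an uncarried-out ``length/socle count'' which would in effect assume the statement being proved. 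As it stands, your outline reduces the lemma to an equivalent unproved claim.

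This is a genuine gap rather than routine bookkeeping, because the claimed identity is false for general modules: over $R=k[x]/(x^2)$ with $M=k$ one has $\tr_R(M)=\soc(R)=(x)$, so $(0:_M\tr_R(M))=M\neq 0$, while $\delta_M$ is injective (since $k$ embeds in $R$ as the socle), so $\ker(\delta_M)\neq(0:_M\tr_R(M))$. Any correct argument must therefore use a special feature of $E$, and this is precisely what the paper's proof does: in the artinian setting it exploits the isomorphism $\gamma\colon R\to\Hom_R(E,E)$, compares the evaluation map $\alpha\colon E^*\otimes_RE\to R$ (whose image is $\tr_R(E)$) with the canonical map $\beta\colon E^*\otimes_RE\to\Hom_R(E,E)$, and then uses $\coker(\beta)\cong\Tor_1^R(\Tr E,E)\cong\Ext^1_R(\Tr E,R)^\vee$ together with \ref{cor20230918b}\eqref{cor20230918b1}. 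In other words, the paper works with cokernels of evaluation maps, where the special property $\Hom_R(E,E)\cong R$ enters directly, rather than with kernels of biduality as you propose; to salvage your approach you would need to supply exactly such an input relating the evaluation pairing on $E$ to $\delta_E$. (Minor points: you should also state that you are in the artinian setting, so that $E$ is finitely generated --- needed for $\Tr E$ and the Auslander--Bridger sequence --- and so that the Matlis double duality invoked at the end applies.)
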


\begin{proof} 
Consider the canonical maps
\begin{align*}
&\alpha\colon E^*\otimes_R E\to R;& \alpha(f\otimes_R x)&=f(x)\\ 
&\beta\colon E^*\otimes_R E\to \Hom_R(E,E);& \beta(f\otimes_R x)&=\left(y\mapsto f(y)x\right)\\
&\gamma\colon R\to \Hom_R(E,E);& \gamma(a)&=(x\mapsto ax).
\end{align*}
Then, $\gamma$ is an isomorphism. As $f(y)x=f(xy)=f(x)y$ for all $x,y\in E$ and $f\in E^*$, we have $\gamma\alpha=\beta$. Thus, $\coker(\alpha)\cong \coker(\beta)$. Also, $\im(\alpha)=\tr_R(E)$ and we have
$$
\coker(\beta)=\Tor_1^R(\Tr E,E)\cong \Ext^1_R(\Tr E,R)^\vee.
$$
This fact along with~\ref{cor20230918b} (a) completes the proof. 
\end{proof}

\begin{lem}\label{prop20230918e}
The following statements are equivalent:
\begin{enumerate}[\rm(i)]
\item\label{prop20230918e1}
$R$ is nearly Gorenstein;
\item\label{prop20230918e2}
$\fm\Ext_R^1(\Tr E,R)=0$;
\item\label{prop20230918e3}
$\Ext_R^1(\Tr E,R)\cong k$.
\end{enumerate}
\end{lem}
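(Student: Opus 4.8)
The plan is to prove the implications $\eqref{prop20230918e3}\Rightarrow\eqref{prop20230918e2}\Rightarrow\eqref{prop20230918e1}\Rightarrow\eqref{prop20230918e3}$, with Lemma~\ref{prop20230918d} doing most of the work. The implication $\eqref{prop20230918e3}\Rightarrow\eqref{prop20230918e2}$ is immediate, since a module isomorphic to $k$ is killed by $\fm$. For $\eqref{prop20230918e2}\Leftrightarrow\eqref{prop20230918e1}$ I would read off Lemma~\ref{prop20230918d} directly: that lemma gives $\ann_R\Ext^1_R(\Tr E,R)=\tr_R(E)$, so $\fm\Ext^1_R(\Tr E,R)=0$ holds precisely when $\fm\subseteq\tr_R(E)$. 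As $R$ is artinian, $E$ is the canonical module of $R$, so this containment is exactly the condition that $R$ be nearly Gorenstein (see~\ref{para20240911b}).

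The one step with genuine content is $\eqref{prop20230918e2}\Rightarrow\eqref{prop20230918e3}$, where the task is to upgrade ``annihilated by $\fm$'' to ``isomorphic to $k$''. Put $N=\Ext^1_R(\Tr E,R)$ and assume $\fm N=0$, so $N\cong k^{\oplus t}$ for some $t\geq 0$. Applying the Matlis dual and invoking the isomorphism $N^\vee\cong R/\tr_R(E)$ of Lemma~\ref{prop20230918d} yields $R/\tr_R(E)\cong k^{\oplus t}$. But $R/\tr_R(E)$ is a cyclic $R$-module, and a cyclic module can be isomorphic to $k^{\oplus t}$ only when $t\in\{0,1\}$, with $t=1$ corresponding to $\tr_R(E)=\fm$ and $t=0$ to $\tr_R(E)=R$. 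So everything reduces to ruling out $\tr_R(E)=R$; this is where the hypothesis that $R$ is not Gorenstein is used (and it is genuinely needed, since for Gorenstein $R$ the module $\Tr E$ is free, $N=0$, and \eqref{prop20230918e3} fails).

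To exclude $\tr_R(E)=R$ I would reuse the setup from the proof of Lemma~\ref{prop20230918d}. If $\tr_R(E)=\im(\alpha)=R$, then $\alpha$, hence $\beta=\gamma\alpha$, is surjective; in particular $\mathrm{id}_E$ lies in the image of $\beta$, which produces $f_1,\dots,f_n\in E^*$ and $x_1,\dots,x_n\in E$ with $y=\sum_i f_i(y)x_i$ for all $y\in E$. Thus the map $E\to R^{\oplus n}$, $y\mapsto(f_i(y))_i$, is split by $e_i\mapsto x_i$, exhibiting the finitely generated module $E$ as a direct summand of $R^{\oplus n}$, hence free; comparing socles (recall $\soc(E)\cong k$ while $\soc(R^{\oplus m})\cong k^{\oplus m\cdot\type(R)}$) forces $\type(R)=1$, i.e.\ $R$ Gorenstein, a contradiction. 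Hence $t=1$ and $N\cong k$. I expect this last point---showing $\tr_R(E)\neq R$ for non-Gorenstein $R$---to be the main (and essentially only) obstacle; an alternative would be simply to cite the Herzog--Hibi--Stamate fact that the trace of the canonical module is the whole ring exactly for Gorenstein rings.
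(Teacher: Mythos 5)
Your proposal is correct and follows essentially the same route as the paper: the printed proof is exactly the one-line deduction from Lemma~\ref{prop20230918d} together with \ref{cor20230918b}\eqref{cor20230918b1} and \eqref{cor20230918b2}, namely $\fm N=0$ for $N=\Ext^1_R(\Tr E,R)$ iff $\fm$ kills the cyclic module $N^\vee\cong R/\tr_R(E)$ iff that module is isomorphic to $k$ iff $N\cong k$, while \eqref{prop20230918e1}$\Leftrightarrow$\eqref{prop20230918e2} comes from $\ann N=\tr_R(E)$, just as you argue. Where you diverge is in handling the degenerate case $\tr_R(E)=R$ explicitly: your dual-basis argument (surjectivity of $\alpha$ forces $\mathrm{id}_E$ into the image of $\beta$, so $E$ is a summand of a free module, and comparing $\soc(E)\cong k$ with $\soc(R^{\oplus m})$ gives $\type(R)=1$) is sound, and citing Herzog--Hibi--Stamate for ``$\tr_R(\omega_R)=R$ iff $R$ is Gorenstein'' would serve equally well; the paper instead leans on \ref{cor20230918b}\eqref{cor20230918b2}, which silently presumes $R/\tr_R(E)\neq0$. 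Your side remark is also accurate: for an artinian Gorenstein ring, $\Tr E$ is free up to free summands, so $\Ext^1_R(\Tr E,R)=0\not\cong k$ while \eqref{prop20230918e1} and \eqref{prop20230918e2} hold, so the implication \eqref{prop20230918e2}$\Rightarrow$\eqref{prop20230918e3} carries an implicit non-Gorenstein (equivalently $\tr_R(E)\neq R$) hypothesis; this is harmless in the paper, whose only use of the lemma, in the proof of Theorem~\ref{int13}\eqref{int13a}, is in the non-Gorenstein setting, but your proof is the more careful one.
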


\begin{proof}
The assertion follows from Lemma~\ref{prop20230918d} and~\ref{cor20230918b} parts~\eqref{cor20230918b1} and~\eqref{cor20230918b2}.
\end{proof}

The previous observations enable us to recover the following from~\cite{trcan}.

\begin{cor}
Assume that $R$ is a $d$-dimensional Cohen-Macaulay local ring with a canonical module $\omega_R$. Then, $\tr_R(\omega_R)$ is contained in $\ann \Ext^{d+1}_R(\Tr\omega_R,R)$.
\end{cor}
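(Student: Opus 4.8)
The plan is to reduce the $d$-dimensional statement to the artinian case already handled by Lemma~\ref{prop20230918d}, via a Cohen--Macaulay reduction. First I would pick a maximal $R$-regular sequence $\underline{x}=x_1,\dots,x_d$ contained in $\m$, and pass to the artinian quotient $\bar R=R/(\underline{x})$. Because $\underline{x}$ is a regular sequence on both $R$ and $\omega_R$ (the canonical module is a maximal Cohen--Macaulay module), one has $\omega_{\bar R}\cong \omega_R/(\underline{x})\omega_R$, and since $\bar R$ is artinian, $\omega_{\bar R}\cong E_{\bar R}(k)=:E$ in the notation of the previous section. The key point I would verify is the base-change formula for the relevant Ext: for a module of finite projective dimension, or more relevantly here using the regularity of $\underline{x}$ on $R$, there is a natural isomorphism
\begin{equation}\label{eq:basechange}
\Ext^{d+1}_R(\Tr_R\omega_R,R)\otimes_R\bar R\;\cong\;\Ext^{1}_{\bar R}(\Tr_{\bar R}E,\bar R),
\end{equation}
obtained by combining the standard degree-shift under a regular element (each regular element raises the Ext index by one against $R$) with the fact that the Auslander transpose commutes with this kind of reduction up to free summands. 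This is where the main work lies: one must check that $\Tr_R\omega_R$ reduces to $\Tr_{\bar R}E$ after tensoring with $\bar R$, which follows by reducing a free presentation of $\omega_R$ modulo $(\underline x)$ and using that $\Hom_R(-,R)\otimes_R\bar R\cong\Hom_{\bar R}(-\otimes_R\bar R,\bar R)$ on the presenting free modules, together with right-exactness of $\coker$.

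Granting \eqref{eq:basechange}, Lemma~\ref{prop20230918d} applied over the artinian ring $\bar R$ gives $\ann_{\bar R}\Ext^{1}_{\bar R}(\Tr_{\bar R}E,\bar R)=\tr_{\bar R}(E)=\tr_{\bar R}(\omega_{\bar R})$. On the other hand, trace ideals behave well under Cohen--Macaulay reduction: $\tr_R(\omega_R)\bar R\subseteq\tr_{\bar R}(\omega_{\bar R})$ (images of $R$-linear maps $\omega_R\to R$ reduce to $\bar R$-linear maps $\omega_{\bar R}\to\bar R$). Combining these, the image of $\tr_R(\omega_R)$ in $\bar R$ annihilates $\Ext^{d+1}_R(\Tr_R\omega_R,R)\otimes_R\bar R$; equivalently,
\begin{equation*}
\tr_R(\omega_R)\cdot\Ext^{d+1}_R(\Tr_R\omega_R,R)\;\subseteq\;(\underline x)\,\Ext^{d+1}_R(\Tr_R\omega_R,R).
\end{equation*}
To conclude $\tr_R(\omega_R)\subseteq\ann_R\Ext^{d+1}_R(\Tr_R\omega_R,R)$ I would either invoke Nakayama after localizing/completing (the module $\Ext^{d+1}$ is finitely generated, $\underline x\subseteq\m$), or—more cleanly—argue that since $\underline x$ is a regular sequence on $R$ one can run the reduction one element at a time and use a snake-lemma/long-exact-sequence bookkeeping to track the annihilator exactly rather than modulo $(\underline x)$; this avoids any Nakayama subtlety. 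The main obstacle I anticipate is precisely establishing \eqref{eq:basechange} cleanly, i.e., checking that forming the Auslander transpose and shifting the Ext-degree by a regular element are compatible with reduction mod $\underline x$ up to free summands; once that bookkeeping is pinned down, everything else is a direct citation of Lemma~\ref{prop20230918d} together with the elementary behavior of trace ideals under quotients.
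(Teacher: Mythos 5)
There is a genuine gap — in fact two, and they sit exactly where you hedge. The part you do justify is fine: reducing a free presentation of $\omega_R$ modulo $\underline{x}$ shows $\Tr_{\bar R}\omega_{\bar R}\cong(\Tr_R\omega_R)\otimes_R\bar R$ up to free summands, and Rees's degree shift (which applies because $\Tr_{\bar R}\omega_{\bar R}$ is killed by $\underline{x}$) gives $\Ext^1_{\bar R}(\Tr_{\bar R}\omega_{\bar R},\bar R)\cong\Ext^{d+1}_R(\Tr_{\bar R}\omega_{\bar R},R)$. But that is not your displayed base-change formula: you still need $\Ext^{d+1}_R\bigl((\Tr_R\omega_R)\otimes_R\bar R,R\bigr)\cong\Ext^{d+1}_R(\Tr_R\omega_R,R)\otimes_R\bar R$, and such base change fails in general. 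It would at least require $\underline{x}$ to be a regular sequence on $\Tr_R\omega_R$ — which it need not be, since the transpose typically has small depth — and even then the long exact sequences in the first variable only yield an extension involving $\Ext^{d}_R(\Tr\omega_R,R)/\underline{x}$ and the $\underline{x}$-torsion of $\Ext^{d+1}_R(\Tr\omega_R,R)$, not the clean isomorphism you assert. Second, and more fatally, even granting that isomorphism the endgame does not close: what you obtain is $t\,N\subseteq(\underline{x})N$ for $N=\Ext^{d+1}_R(\Tr\omega_R,R)$ and $t\in\tr_R(\omega_R)$, and no version of Nakayama converts $tN\subseteq(\underline{x})N\subseteq\m N$ into $tN=0$ — every element of $\m$ satisfies that containment. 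Annihilation of $N/\underline{x}N$ over $\bar R$ simply does not descend to annihilation of $N$ over $R$, and the proposed ``snake-lemma bookkeeping'' is precisely the missing argument rather than a routine verification.

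For comparison, the paper avoids any reduction to the artinian case. For $t\in\tr_R(\omega_R)$, multiplication by $t$ on $R$ factors through a finite direct sum of copies of $\omega_R$ (Leuschke--Wiegand, Lemma 11.41); applying $\Ext^{d+1}_R(\Tr\omega_R,-)$, multiplication by $t$ on $\Ext^{d+1}_R(\Tr\omega_R,R)$ then factors through $\Ext^{d+1}_R(\Tr\omega_R,\omega_R)^{\oplus n}$, which vanishes because $\omega_R$ has injective dimension $d$. That factoring trick is what replaces both of your problematic steps, and it is also the natural $d$-dimensional substitute for Lemma~\ref{prop20230918d}, whose equality of ideals you cannot expect to transport through the reduction.
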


\begin{proof}
By~\cite[Lemma 11.41]{lw}, for any element $t\in \tr_R(\omega_R)$, the multiplication map $R\xra{t} R$ factors through a direct sum of copies of $\omega_R$. Hence, the induced multiplication map $\Ext^{d+1}_R(\Tr\omega_R,R)\xra{t} \Ext^{d+1}_R(\Tr\omega_R,R)$ factors through a direct sum of copies of $\Ext^{d+1}_R(\Tr\omega_R,\omega_R)$. However, this Ext module vanishes because $\omega_R$ has injective dimension $d$. Therefore, $t\in \ann \Ext^{d+1}_R(\Tr\omega_R,R)$, as desired.
\end{proof}

\begin{lem}\label{prop20230918c}
The following statements hold true:
\begin{enumerate}[\rm(a)]
\item\label{prop20230918c1}
$\fm E^*=0$ if and only if $\fm E^{**}=0$.
\item\label{prop20230918c2}
$\fm(E/k)=0$ if and only if $\fm^2=0$.
\end{enumerate}
\end{lem}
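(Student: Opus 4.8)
The plan is to prove both parts by reducing everything to statements about short exact sequences obtained from the Matlis dual. For part~\eqref{prop20230918c1}, I would use the identity $M^* \cong (M\otimes_R E)^\vee$ (valid for artinian $R$, as already used in the proof of Proposition~\ref{siz}) together with~\ref{cor20230918b}\eqref{cor20230918b1}, which says annihilators are preserved under Matlis duality. Applying this with $M = E$ gives $\ann_R(E^*) = \ann_R((E\otimes_R E)^\vee) = \ann_R(E\otimes_R E)$, and similarly with $M = E^*$ — but here I'd rather use~\ref{cor20230918b}\eqref{cor20230918b3}, which says $E^*$ is a direct summand of $E^{***} = (E^*)^{**}$. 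That immediately gives $\ann_R(E^{**}{}^*)\supseteq$ (wait — more carefully) $\fm E^{**} = 0 \implies \fm E^{***}=0 \implies \fm E^*=0$ since $E^*\mid E^{***}$; conversely $\fm E^* = 0 \implies \fm E^{**}= 0$ because $E^{**} = (E^*)^\vee$ and again $\ann_R(E^*) = \ann_R((E^*)^\vee)$ by~\ref{cor20230918b}\eqref{cor20230918b1}. So part~\eqref{prop20230918c1} is really just two applications of~\ref{cor20230918b}\eqref{cor20230918b1} and one of~\eqref{cor20230918b3}; I expect this to be routine.

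For part~\eqref{prop20230918c2}, the ``if'' direction is easy: if $\fm^2=0$ then $\fm(E/k) \subseteq \fm E / (\fm E\cap k)$, and $\fm E$ is a module with $\fm\cdot\fm E = 0$, so certainly $\fm(E/k)=0$ — actually even more directly, $E/k$ is a subquotient of $E$ annihilated by... hmm, not quite; I should instead say: $\fm^2 = 0$ forces $E$ to be a $k$-vector space (since $\fm E$ is killed by $\fm$ and... no). Let me reconsider: if $\fm^2 = 0$ then every $R$-module is annihilated by $\fm^2$, so $\fm(E/k)$ is annihilated by $\fm$ and sits inside $E/k$; but I want it to be $0$. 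The cleanest route: $\fm^2=0 \implies \fm E$ is a $k$-vector space, and $\soc(E)$... Actually the inclusion $k = \soc(R)\cdot(\text{something})$ — here $k\hookrightarrow E$ is the socle inclusion $\theta$ from~\ref{cor20230918b}\eqref{lem20230918a}, so $k = \soc(E)$. Then $\fm E \subseteq \soc(E) = k$ when $\fm^2 = 0$ (as $\fm\cdot\fm E = 0$), hence $\fm(E/k) = (\fm E + k)/k = k/k = 0$. Good.

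The ``only if'' direction of part~\eqref{prop20230918c2} is the main obstacle and needs the most care. Suppose $\fm(E/k) = 0$, i.e.\ $\fm E \subseteq k = \soc(E)$. I want to conclude $\fm^2 = 0$, equivalently (by Matlis duality and~\ref{cor20230918b}\eqref{cor20230918b1}, applied to $R$ itself since $R^\vee = E$) that $\fm^2 E = 0$. Now $\fm^2 E = \fm(\fm E) \subseteq \fm k = 0$ since $k$ is killed by $\fm$. Wait — that seems to close it immediately: $\fm E \subseteq k$ gives $\fm^2 E \subseteq \fm k = 0$, and $\ann_R(E) = \ann_R(E^\vee) = \ann_R(R) = 0$... no, that's the wrong annihilator direction. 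I need: $\fm^2 E = 0 \iff \fm^2 \subseteq \ann_R(E)$, and $\ann_R(E) = \ann_R(E^\vee) = \ann_R(R) = 0$ would say $E$ is faithful, giving $\fm^2 = 0$ directly. So the real content is just that $E$ is a faithful $R$-module (true since $R\hookrightarrow E^{?}$... actually $\ann_R(E) = \ann_R(R^\vee) = \ann_R(R) = 0$ by~\ref{cor20230918b}\eqref{cor20230918b1} with $M = R$). Hence $\fm^2 E = 0 \implies \fm^2 = 0$. Combining: $\fm(E/k)=0 \implies \fm E\subseteq\soc(E)=k \implies \fm^2 E=0 \implies \fm^2=0$. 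I'd double-check the identification $\soc(E) = k$ — this holds because $E = E_R(k)$ has simple socle, a standard fact I may cite or derive from~\ref{cor20230918b}\eqref{lem20230918a}. So in the end neither part is deep; the one place to be careful is tracking which direction of the annihilator equality~\ref{cor20230918b}\eqref{cor20230918b1} is being used, and making sure the faithfulness of $E$ is stated cleanly.
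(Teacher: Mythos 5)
Your proposal is correct, and for part~\eqref{prop20230918c1} it is essentially the paper's proof: the nontrivial implication is obtained exactly as in the paper by applying \ref{cor20230918b}~\eqref{cor20230918b3} to $M=E$, together with the observation that $\fm E^{**}=0$ forces $\fm E^{***}=0$. One small repair: your justification of the easy implication via the identification $E^{**}=(E^*)^\vee$ is not right, since $(E^*)^\vee=\Hom_R(E^*,E)$ while $E^{**}=\Hom_R(E^*,R)$; but that step is true for a simpler reason, namely that $\ann_R(M)$ annihilates $\Hom_R(M,N)$ for every $N$, so $\fm E^*=0$ immediately gives $\fm E^{**}=0$ (the paper dismisses this direction as obvious), so this is a slip in the stated reason rather than a gap.

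For part~\eqref{prop20230918c2} your route is sound but packaged differently from the paper's. The paper applies the Matlis dual to $0\to\fm\to R\to k\to 0$ to get $0\to k\to E\to\fm^\vee\to 0$, uses \ref{cor20230918b}~\eqref{lem20230918a} to identify $\fm^\vee\cong E/k$, and then concludes by transferring annihilators with \ref{cor20230918b}~\eqref{cor20230918b1} applied to $\fm$. You instead work directly inside $E$: from $\soc(E)=k$ (which, as you note, follows from \ref{cor20230918b}~\eqref{lem20230918a}, or from essentiality of $k\subseteq E$), the hypothesis $\fm^2=0$ gives $\fm E\subseteq\soc(E)=k$, hence $\fm(E/k)=0$; conversely $\fm E\subseteq k$ gives $\fm^2E=0$, and faithfulness of $E$, i.e.\ $\ann_R(E)=\ann_R(R^\vee)=\ann_R(R)=0$ by \ref{cor20230918b}~\eqref{cor20230918b1} with $M=R$, yields $\fm^2=0$. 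Both arguments rest on the same two ingredients, \ref{cor20230918b}~\eqref{cor20230918b1} and \ref{cor20230918b}~\eqref{lem20230918a}; the paper's version condenses the containment chase into the single isomorphism $\fm^\vee\cong E/k$, whereas yours is an element-level argument that makes the roles of $\soc(E)$ and of the faithfulness of $E$ explicit. Either is acceptable.
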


\begin{proof}
\eqref{prop20230918c1}
The ``only if'' part is obvious. The ``if'' part follows by
applying~\ref{cor20230918b}~\eqref{cor20230918b3} to $M=E$.

\eqref{prop20230918c2}
Applying the Matlis dual functor to the exact sequence $0\to \fm\to R\to k\to 0$,
we have an exact sequence $0\to k\to E\to \fm^\vee\to 0$. It follows from~\ref{cor20230918b}~\eqref{lem20230918a} that
$\fm^\vee\cong E/k$. The assertion now follows from~\ref{cor20230918b}~\eqref{cor20230918b1}.
\end{proof}

We are now ready to prove Theorem~\ref{int13}~\eqref{int13a}. Note that the assumption that $R$ is not Gorenstein is neccesary by Example~\ref{ex20240917d}.\vspace{2mm}

\noindent \emph{Proof of Theorem~\ref{int13}~\eqref{int13a}.}
Assume that  $\m^2=0$. It follows from~\cite[Example 2.6]{IT} that $\soc(R)$ annihilates $\Ext_R^1(\Tr E,R)$. Hence, by Lemma~\ref{prop20230918d} we have $\soc(R) \subseteq \tr_R(E)$. Therefore, by assumption we have $\fm\subseteq \tr_R(E)$, i.e., $R$ is nearly Gorenstein. Furthermore, by our assumption, $\fm \Hom_R(E, \fm) = 0$. Note that $E$ is indecomposable and non-free because $R$ is not Gorenstein. Thus, $\fm E^* = 0$ by~\ref{cor20230918b}~\eqref{cor20230918b4}.

For the converse, assume that $R$ is nearly Gorenstein and $\m E^*=0$. By Lemma~\ref{prop20230918e}, the exact sequence
$0 \to \Ext_R^1(\Tr E, R) \to E \to E^{**}$ induces an exact sequence $0 \to k \to E \to E^{**}$. Hence, it follows from~\ref{cor20230918b}~\eqref{lem20230918a} that there is an injective homomorphism \(E/k \to E^{**}\). Therefore, it follows from Lemma~\ref{prop20230918c}~\eqref{prop20230918c1} and~\eqref{prop20230918c2} that $\m^2=0$, as desired.
\qed\vspace{2mm}

We now aim for the proof of Theorem~\ref{int13}~\eqref{int13b} that uses the following lemma.

\begin{lem}\label{1}
Assume that $R$ is artinian and choose an artinian Gorenstein local ring $(S,\n,k)$ and an ideal $I$ of $S$ such that $R\cong S/I$.
Then, there exist isomorphisms
\begin{align*}
E\cong (0:_SI)&&&
E^*\cong\Hom_S((0:_SI),S/I)
\end{align*}
and a short exact sequence
\begin{equation}\label{eq20211116b}
\!\!0\to S/(I:_S(0:_SI))\xra{f}\Hom_S((0:_SI),S/I)\to\Ext_S^1(S/(0:_SI),S/I)\to 0
\end{equation}
of $S$-modules, where $f(\overline a)(x)=\overline{ax}$ for $\overline a\in S/(I:_S(0:_SI))$ and $x\in (0:_SI)$.
\end{lem}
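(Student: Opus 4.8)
The plan is to work entirely with the Gorenstein ring $S$, using that $E_S(k) \cong S$ since $S$ is artinian Gorenstein, and that Matlis duality over $S$ is simply $\Hom_S(-,S)$. First I would establish the identification $E \cong (0:_SI)$: applying $\Hom_S(-,S)$ to the exact sequence $0 \to I \to S \to R \to 0$ gives $0 \to \Hom_S(R,S) \to S \to \Hom_S(I,S)$, and one checks $\Hom_S(R,S) = (0:_S I)$, which is an $R$-module since it is annihilated by $I$; standard Matlis duality over $S$ then shows this is the Matlis dual of $R$ over $R$, i.e.\ $E$. Next, for the description of $E^*$, I would simply compute $E^* = \Hom_R(E,R) = \Hom_R((0:_SI), S/I)$, and observe that since $(0:_S I)$ is already killed by $I$, there is no difference between $\Hom_R$ and $\Hom_S$ here, giving $E^* \cong \Hom_S((0:_SI), S/I)$.

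For the short exact sequence, the key is to write down a free-ish presentation of $(0:_SI)$ over $S$ modulo $I$. I would start from the natural surjection $S \twoheadrightarrow S/(0:_SI)$ — equivalently the inclusion $(0:_S I) \hookrightarrow S$ — and apply $\Hom_S(-, S/I)$. More precisely, from the short exact sequence $0 \to (0:_SI) \to S \to S/(0:_SI) \to 0$, applying $\Hom_S(-, S/I)$ yields the long exact sequence
\begin{equation*}
0 \to \Hom_S(S/(0:_SI), S/I) \to \Hom_S(S, S/I) \to \Hom_S((0:_SI), S/I) \to \Ext^1_S(S/(0:_SI), S/I) \to \Ext^1_S(S, S/I) = 0.
\end{equation*}
Here $\Hom_S(S,S/I) = S/I$, and $\Hom_S(S/(0:_SI), S/I) = (I :_S (0:_SI))/I$ (the elements of $S/I$ killed by $(0:_SI)$). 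Therefore the image of the middle map $S/I \to \Hom_S((0:_SI), S/I)$ is $S/(I:_S(0:_SI))$, and the map itself is exactly $\bar a \mapsto (x \mapsto \overline{ax})$, which is the map $f$ in the statement. Extracting the image and cokernel from this long exact sequence gives precisely the asserted short exact sequence \eqref{eq20211116b}.

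The main obstacle, and the only place requiring genuine care, is the first identification $E \cong (0:_SI)$ together with the compatibility $E^* \cong \Hom_S((0:_SI), S/I)$ — that is, correctly tracking that Matlis duality over $R$ is computed as $\Hom_S(-,S)$ on $R$-modules because $S$ is artinian Gorenstein (so $E_R(k) = \Hom_S(R,S)$), and checking that $\Hom_R$ against $R$ of an $R$-module agrees with $\Hom_S$ against $S/I$. Once this translation is in place, everything else is a routine application of $\Hom_S(-,S/I)$ to a two-term presentation and bookkeeping with colon ideals; the explicit formula for $f$ drops out of the description of the connecting-free portion of the long exact sequence. I would also remark that one should double-check the degenerate cases (e.g.\ $I = 0$, excluded since $R$ is not a field, or $(0:_SI)$ small) do not cause the sequence to collapse unexpectedly, but these are immediate.
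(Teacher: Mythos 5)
Your proposal is correct and follows essentially the same route as the paper: identify $E\cong\Hom_S(R,E_S(k))\cong\Hom_S(R,S)\cong(0:_SI)$ using $E_S(k)\cong S$, note $\Hom_R$ and $\Hom_S$ agree on the $I$-annihilated module $(0:_SI)$ to get $E^*\cong\Hom_S((0:_SI),S/I)$, and then apply $\Hom_S(-,S/I)$ to $0\to(0:_SI)\to S\to S/(0:_SI)\to 0$, identifying $\Hom_S(S/(0:_SI),S/I)$ with $(I:_S(0:_SI))/I$ to extract the sequence \eqref{eq20211116b} with the stated map $f$. No gaps to flag.
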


\begin{proof}
Since $S$ is artinian and Gorenstein, $E_S(k)\cong S$. Therefore, we have the isomorphisms $E\cong \Hom_S(S/I,E_S(k))\cong \Hom_S(S/I, S)\cong (0:_SI)$. This implies that $E^* \cong \Hom_{S/I}((0:_SI), S/I)\cong \Hom_S((0:_SI), S/I)$. Applying the functor $\Hom_S(-,S/I)$ to the short exact sequence $0 \to (0:_SI) \to S \to S/(0:_SI) \to 0$, we obtain an exact sequence
\small{
$$ 
0 \to \Hom_S(S/(0:_SI), S/I) \xra{g} S/I \xra{e} \Hom_S((0:_SI), S/I) \to \Ext_S^1(S/(0:_SI), S/I) \to 0
$$
}\normalsize
of $S$-modules in which $g(\alpha) = \alpha(\overline{1})$ for $\alpha \in \Hom_S(S/(0:_SI), S/I)$ and $e(\overline{a})(x) = \overline{ax}$ for $\overline{a} \in S/I$ and $x \in (0:_SI)$.
There is an isomorphism
$$
\left(I:_S(0:_SI)\right)/I = \left(0:_{S/I}(0:_SI)\right) \xra{h} \Hom_S(S/(0:_SI), S/I) 
$$
given by $h(\overline{b})(\overline{c}) = \overline{bc}$ for $\overline{b} \in \left(I:_S(0:_SI)\right)/I$ and $\overline{c} \in S/(0:_SI)$. 
Hence, we obtain an exact sequence
$$
0\to \left(I:_S(0:_SI)\right)/I  \xra{gh} S/I \xra{e} \Hom_S((0:_SI), S/I) \to \Ext_S^1(S/(0:_SI), S/I) \to 0
$$
where $gh(\overline{b}) = \overline{b}$ for $\overline{b} \in \left(I:_S(0:_SI)\right)/I$. 
This induces the exact sequence~\eqref{eq20211116b} with the desired properties.
\end{proof}

\noindent \emph{Proof of Theorem~\ref{int13}~\eqref{int13b}.} 
By Lemma~\ref{1}, the condition $\m E^*=0$ translates to $\n\Hom_S((0:_SI),S/I)=0$. Hence, it follows from the short exact sequence~\eqref{eq20211116b} that $\n(S/(I:_S(0:_SI)))=0$.
Therefore, $\n\subseteq (I:_S(0:_SI))$, which means that the containment $\n(0:_SI)\subseteq I$ holds.
\qed\vspace{2mm}

Using Theorem~\ref{int13}~\eqref{int13b}, the next example shows that artinian Golod rings do not necessarily satisfy the equality $\m E^*=0$.

\begin{ex}
	Let $R=k[x,y]/(x^4,x^2y^2,y^4)$. Note that $R\cong S/I$, where $S=k[x,y]/(x^4,y^4)$ is an artinian Gorenstein local ring with the maximal ideal $\fn=(x,y)$ and $I=(x^2y^2)$. Then, $(0:_SI)=(x^2,y^2)$ and we have $\fn(0:_SI)=\fn^3$, which is not contained in
$I$. Therefore, by Theorem~\ref{int13}~\eqref{int13b} we have $\m E^*\neq 0$.
\end{ex}

The next example shows that the converse of Theorem~\ref{int13}~\eqref{int13b} may not hold.

\begin{ex}\label{ex20240919a}
Let $R=k[x]/(x^2)$. Note that $R\cong S/I$, where $S=k[x]/(x^3)$ is the artinian Gorenstein local ring with $\fn=(x)/(x^3)$ and $I=(x^2)/(x^3)=\soc(S)$.
Clearly, $(0:_SI)=\fn$ and thus, $\fn(0:_SI)=\fn^2=I$. However, $\fm E^*=\fm\neq 0$.

If we choose $S=K[x]/(x^4)$ and $I=\soc(S)$, then $I=(x^2)/(x^4)$ and $R\cong S/I$.
In this case, $(0:_SI)=(x^2)$ and hence, $\fn(0:_SI)=(x^3) \subsetneqq I$. Moreover, again $\fm E^*\neq 0$.
\end{ex}

In light of Example~\ref{ex20240919a}, it is natural to ask the following question.

\begin{question}
Assume that $R$ is artinian. Under what condition does the converse of Theorem~\ref{int13}~\eqref{int13b} hold?
More precisely, if $(S,\n,k)$ is an artinian Gorenstein local ring and $I$ is an ideal of $S$ such that $R\cong S/I$, then when does $\n(0:_S I)\subseteq I$ imply that $\m E^* =0$?
\end{question}

\section{The $1$-dimensional case}

To investigate Question~\ref{question20240911a} in the $1$-dimensional case, we present the following variant of~\ref{cor20230918a}. For the definition and fundamental properties of Ulrich modules, we refer to \cite{BHU, Ulrich} and~\cite{Goto}.

\begin{discussion}\label{para20241014a}
	If \( R \) is a non-Gorenstein Cohen-Macaulay local ring of minimal multiplicity with \( \dim(R) \leq 1 \) and canonical module \( \omega_R \), then \( \omega_R^* \) is Ulrich (hence, a \( k \)-vector space when \( \dim(R) = 0 \)). \end{discussion}
	
\begin{proof}	To see this, note that \( R \) is not a direct summand of \( \omega_R \) and hence, it is not a direct summand of \( \omega_R^* \) by~\cite[Lemma 5.14]{dg}. Also, \( \omega_R^* \) is a second syzygy module and thus, it is a syzygy of a maximal Cohen-Macaulay \( R \)-module. Therefore, by~\cite[Proposition 1.6]{ukt}, we get that \( \omega_R^* \) is Ulrich. 
\end{proof}

Theorem~\ref{int13'} is a supped up version of our discussion in~\ref{para20241014a} to a higher power of $\fm$. In other words, if $\fm$ is Ulrich, then $R$ has minimal multiplicity, which has already been discussed in~\ref{para20241014a}. For the proof of Theorem~\ref{int13'} we need the following lemma that follow by the same argument as in the proof of~\cite[Lemma 2.5]{y}. 

\begin{lem}\label{lem20241207a}
Let $M$ be an $R$-module, and let $x \in \fm\setminus \fm^2$ be an $R$-regular element whose initial form in the associate graded ring $\gr_\fm(R)$ is regular on $\gr_\fm(M)$. Then, for every integer $n\geq1$, there is an isomorphism
$$
\fm^nM/x\fm^nM\cong \left(\fm^{n-1}M/\fm^nM\right)\oplus \left(\fm^nM/x\fm^{n-1}M\right).
$$
\end{lem}

\noindent  \emph{Proof of Theorem~\ref{int13'}.} 
Replacing $R$ with $S:=R[X]_{\fm[X]}$, one can assume without loss of generality that $k$ is infinite. Let us for example check the depth condition. Namely, the key point is that $S / \fm S = (R[X]_{\fm R[X]} / \fm R[X]_{\fm R[X]}) \cong (R[X] / \fm R[X])_{\fm R[X]} $ is a field. Then $\fn := \fm S$ is its unique maximal ideal. Now,  

\[ \mathrm{gr}_\fm(\omega^\ast) \otimes_R S \cong \bigoplus_n \frac{(\fm^n \omega^\ast) \otimes_R S}{(\fm^{n+1} \omega^\ast) \otimes_R S} \cong \bigoplus_n \frac{\mathfrak{n}^n \omega_S^\ast}{\mathfrak{n}^{n+1} \omega_S^\ast} = \mathrm{gr}_{\mathfrak{n}}(\omega_S^\ast)\]
where $\omega_S$ is the canonical module of $S$ and so, $\depth(\gr_{\fn}(\omega_S^*))=1$.

We now begin by assuming that for some integer $n\geq 2$ the ideal $\fm^n$ is Ulrich and work towards proving that $\fm^{n-1}\omega^*$ is also Ulrich. Near the end, we will specialize to the case $n=2$, which gives us the assertion of Theorem~\ref{int13'}.

%The case $n = 1$ implies that $R$ has minimal multiplicity, which has been discussed in~\ref{para20241014a}.
Consider an integer $n \geq 2$. Since $k$ is infinite, the condition of having a principal reduction of $\fm$ is a general condition. Similarly, having $x \in \fm \setminus \fm^2$ whose initial form is regular on $\gr_\fm(\omega^*)$ is also a general condition.
The intersection of general conditions is again general because the intersection of non-empty Zariski open sets in $\fm / \fm^2$ is also a non-empty open set. Hence, we can find $x \in \fm \setminus \fm^2$ such that $\fm^{n+1} = x\fm^n$ and whose initial form in $\gr_\fm(R)$ is regular on $\gr_\fm(\omega^*)$.
To show that $N := \fm^{n-1}\omega^*$ is Ulrich, it suffices to show that $N/xN$ is isomorphic to a direct sum of copies of $k$. Since $n-1 \geq 1$, by Lemma~\ref{lem20241207a}, we get that $N/xN$ is isomorphic to the direct sum of $\fm^{n-2}\omega^* / \fm^{n-1}\omega^*$ and $\fm^{n-1}\omega^* / x\fm^{n-2}\omega^*$.
Now, $\fm^{n-2}\omega^* / \fm^{n-1}\omega^*$ is already annihilated by $\fm$, so we only need to show that $\fm^{n-1}\omega^* / x\fm^{n-2}\omega^*$ is also annihilated by $\fm$. When $n = 2$, this means we need to show that $\fm\omega^* / x\omega^*$ is annihilated by $\fm$. Applying $\Hom_R(\omega, -)$ to the exact sequence
$$
0 \to R \xra{x} R \to R/xR \to 0
$$
we see that $\omega^* / x\omega^*$ is isomorphic to a submodule of $\Hom_R(\omega, R/xR)$. Since $R/xR$ is not Gorenstein, we have
$$
\Hom_R(\omega, R/xR) \cong \Hom_{R/xR}(\omega / x\omega, R/xR)\cong \Hom_{R/xR}(\omega_{R/xR}, \fm/Rx)
$$
which then embeds into $\fm F$, for some free $R/xR$-module $F$.
Thus, the submodule $\fm\omega^* / x\omega^*$ of $\omega^* / x\omega^*$ embeds into $\fm(\fm F) = \fm^2 F$. Since $F$ is a free $R/xR$-module and $\fm^3 = x\fm^2$, it follows that $\fm^2 F$ is annihilated by $\fm$. Hence, $\fm\omega^* / x\omega^*$ is also annihilated by $\fm$, which completes the proof.
\qed

\begin{para}
Far-flung Gorenstein rings of dimension 1 are defined by the condition that the trace ideal $\text{tr}(\omega_R)$ of the canonical module equals the conductor ideal; see~\cite{HHS2}.   
\end{para}

\begin{cor}
Let $R$ be a complete  Far-flung Gorenstein domain with the canonical module $\omega_R$. Then, $\fm\omega_R^*$ is Ulrich when $\fm^2$ is Ulrich. 
\end{cor}
 
\begin{proof}
We have $\omega_R^* = \omega_R^{-1} = \overline{R}$; see~\cite[A.4]{HHS2}. This implies $ \text{gr}_\mathfrak{m}(\omega_R^*) = \text{gr}_\mathfrak{m}(\overline{R}) = \text{gr}_t^n(\overline{R}), $ where \( n \) satisfies \( \mathfrak{m} \overline{R} = (t^n) \) and \( t \) is a uniformizer for \( \overline{R} \). Consequently, \( \text{gr}_\mathfrak{m}(\omega_R^*) \) is the \( n \)-th Veronese subring of \( k[t] \), which is Cohen-Macaulay and thus of positive depth, so  the desired claim follows from Theorem \ref{int13'}. 
 \end{proof}
 
%Now, one can ask the following natural questions.
 
\begin{para}\label{para20250129g}
A natural question to ask is: when do 1-dimensional Cohen-Macaulay local rings of minimal multiplicity satisfy the depth condition $\depth(\gr_\fm(\omega_R^*))=1$?

The Valabrega-Valla criterion shows that the initial form of an element $a$ is regular on $\gr_\fm(\omega_R^*)$ if and only if 
\begin{equation}\label{eq20250129e}
\fm^i(\omega_R^*) \cap a(\omega_R^*) = \fm^{i-1}a(\omega_R^*)
\end{equation}
for all $i>0$, where $a\in\fm$ is  such that $(a)$ is a minimal reduction of $\fm$. Hence, the answer to the above question is affirmative. Indeed, if $R$ has minimal multiplicity, then $\fm^2=a\fm$, so the equality~\eqref{eq20250129e} holds for all
$i>0$. This implies that $\gr_\fm(\omega_R^*)$ has positive depth. In fact, since $\omega_R^*$ is in $\Omega \text{CM}(R)$ and $R$ has minimal multiplicity, it is Ulrich, so that the conclusion also follows from~\cite[Corollary 1.6]{BHU}.
\end{para} 

% \begin{para}
% Another natural question is the following: Suppose $\depth(\gr_\fm(\omega_R^*))=1$ and $\fm^n$ is Ulrich. Beyond minimal multiplicity and Far-flung Gorenstein rings, is $\fm^{n-1} \omega_R^*$ also Ulrich?

% Unlike the question asked in~\ref{para20250129g}, the answer to this question is negative. Let $R=k[[t^4,t^5,t^7]]$. Then $\omega_R=(1,t^3)$,
% and $\omega_R^*=\fm$. This implies that $R$ is not far-flung Gorenstein. Clearly, $\lambda_R(\fm)+\dim R-1=3<4\leq e(R)$. By definition, 
% the ring $R$ is not of minimal multiplicity and hence, $\omega_R^*=\fm$ is not Ulrich.
% We have $\fm^2=(t^8,t^9,t^{10},t^{11})$, and $\fm^3=(t^{12},t^{13},t^{14},t^{15})=t^4\fm^2,$
% so that $\fm^2$ is Ulrich. Note that $\depth(\gr_\fm(\omega_R^*))=\depth(\gr_\fm(\fm))=1$.
% \end{para}

\begin{para}\label{6.7}
Finally, one may ask: If $n$ is a positive integer such that $\fm^n$ is Ulrich, is $\fm^{n-1} \omega_R^*$ also Ulrich?

The answer is again negative. Let $R=k[\![t^7,t^{11},t^{13},t^{15}]\!]$. One can check that $\{t^{14},t^{18},t^{20},t^{22},t^{24},t^{26},t^{30}\}$ is a minimal generating set for $\m^2$. In particular, since multiplicity of $R$ is $7$, the ideal $\m^2$ is Ulrich. By standard numerical semigroup methods (i.e. via calculating the psuedo-Frobenius numbers) we can calculate a canonical ideal for $R$ to be $I=(t^7,t^{11},t^{13})$. Then, $I^* \cong (t^7:_RI)=(t^7,t^{24},t^{26},t^{30})$. Thus, $\m I^*=(t^{14},t^{18},t^{20},t^{22})$. In particular, $\m I^*$ is not Ulrich.
\end{para}

We conclude the paper with the following example relating to Theorem \ref{int13'}.

\begin{ex}Let $R=k[[t^5,t^6,t^7]]$. Then
$\fm^2=(t^{10},t^{11},t^{12},t^{13},t^{14})$ is Ulrich. We have $\omega_R=(1,t)$ and
$N:=\omega_R^*=(t^5,t^6,t^{14}).$ As $\fm N=(t^{10},t^{11},t^{12},t^{13})$ has less than $5$ generators, $\fm N$ is not Ulrich. The
multiplication map $N/ \fm N \to \fm N/\fm^2N$ by $t^5$ sends $t^{14}$ to $0$. Hence depth
$\gr_\fm(N)=0$.\end{ex}
\section*{Acknowledgments}

The authors are grateful to Justin Lyle, Kazuho Ozeki and Toshinori Kobayashi for reading an earlier version of this paper and providing numerous comments that improved the paper. Especially, we thank Toshinori Kobayashi for  \ref{tinytr} and Justin Lyle for the example used in \ref{6.7}.

\end{document}

====================================

add grants

Last polish

====================================